\renewenvironment{proof}[1][\proofname] {\par\pushQED{\qed}\normalfont\topsep6\p@\@plus6\p@\relax\trivlist\item[\hskip\labelsep\bfseries#1\@addpunct{.}]\ignorespaces}{\popQED\endtrivlist\@endpefalse}
\newtheorem{theorem}{\bf Theorem}[section]
\newtheorem{lemma}[theorem]{\bf Lemma}
\newtheorem{conjecture}[theorem]{\bf Conjecture}
\theoremstyle{definition}
\newtheorem{definition}[theorem]{\bf Definition}
\def\eps{\varepsilon}
\def\vu{\textbf{u}}
\def\vx{\textbf{x}}
\def\vy{\textbf{y}}
\def\cF{\mathcal{F}}
\def\cG{\mathcal{G}}
\def\cH{\mathcal{H}}
\def\ex{\mathrm{ex}}
\title{\vspace{-0.9cm}The Tur\'an number of the grid}
\date{}
\author{
Domagoj Brada\v{c}\thanks{Department of Mathematics, ETH, Z\"urich, Switzerland. Research supported in part by SNSF grant 200021\_196965. Email: \textbf{\{domagoj.bradac, benjamin.sudakov, istvan.tomon\}@math.ethz.ch}.}
\and 
Oliver Janzer\thanks{Department of Mathematics, ETH, Z\"urich, Switzerland. Research supported by an ETH Z\"urich Postdoctoral Fellowship 20-1 FEL-35. Email: \textbf{oliver.janzer@math.ethz.ch}.}
\and
Benny Sudakov\footnotemark[1]
\and
Istv\'{a}n Tomon\footnotemark[1]
}
\begin{document}

\maketitle

\begin{abstract}
    For a positive integer $t$, let $F_t$ denote the graph of the $t\times t$ grid. Motivated by a 50-year-old conjecture of Erd\H{o}s about Tur\'{a}n numbers of $r$-degenerate graphs, we prove that there exists a constant $C=C(t)$ such that $\ex(n,F_t)\leq Cn^{3/2}$. This bound is tight up to the value of~$C$. One of the interesting ingredients of our proof is a novel way of using the tensor power trick.
\end{abstract}

\section{Introduction}

For a graph $H$ and a positive integer $n$, the Tur\'an number (or extremal number), denoted $\ex(n,H)$, is the maximum number of edges in an $H$-free graph on $n$ vertices. Estimating this function for various choices of $H$ is one of the most important problems in extremal graph theory. The celebrated Erd\H os--Stone--Simonovits theorem \cite{ES46,ESi66} states that
$\ex(n,H)=\left(1-\frac{1}{\chi(H)-1}+o(1)\right)\binom{n}{2},$
which determines the asymptotics of $\ex(n,H)$ whenever $\chi(H)\geq 3$. For bipartite graphs the formula only gives $\ex(n,H)=o(n^2)$, but it is known \cite{KST54} that in this case there is some $\eps=\eps(H)>0$ such that $\ex(n,H)=O(n^{2-\eps})$. It is conjectured that for every graph there is a (rational) number $\alpha$ such that $\ex(n,H)=\Theta(n^{\alpha})$. However, there are relatively few bipartite graphs $H$ for which this is known. The existence of a suitable $\alpha$ has not been established even for some very simple graphs such as the cycle $C_8$, the complete bipartite graph $K_{4,4}$ and the cube $Q_3$. For a survey on the topic, we refer the reader to \cite{FS13}.

In general, it is not even clear which graph parameters determine the asymptotics of the Tur\'{a}n number of a bipartite graph. Nevertheless, in 1966, Erd\H{o}s conjectured that $\ex(n,H) = O(n^{2-1/r})$ holds for every $r$-degenerate bipartite graph $H$. Such estimate if true would be best possible. F\"uredi \cite{Fur91} proved a special case of the conjecture when $H$ has maximum degree $r$ on one side of the bipartition. Alon, Krivelevich and Sudakov \cite{AKS03} gave a new proof of this result, using a powerful probabilistic technique, called dependent random choice (see e.g., \cite{FS} for a description of this method and a brief history). They also proved a bound of $O(n^{2-\frac{1}{4r}})$ for general $r$-degenerate $H$. However, Erd\H{o}s' conjecture in full generality remains widely open even in the case $r=2$. 

In this paper we study Tur\'an numbers of grids. For an integer $t\geq 2$, the grid graph $F_t$ has vertex set $[t]\times [t]$ with two vertices joined by an edge if they differ in exactly one coordinate and in that coordinate they differ by exactly one. It is not difficult to see that $F_t$ is a $2$-degenerate graph, and therefore by Erd\H{o}s' conjecture, one can expect its Tur\'an number to be $O(n^{3/2})$. Recently, extremal problems involving grids have attracted considerable attention. Let us briefly mention a few of these papers. Clemens, Miralaei, Reding, Schacht and Taraz \cite{CMRST} gave an upper bound for the size Ramsey number of the $n\times n$ grid. Their bound was improved very recently by Conlon, Nenadov and Truji\'c~\cite{CNT22}. Kim, Lee and Lee \cite{KLL16} proved Sidorenko's conjecture for grids (in arbitrary dimension). F\"uredi and Ruszink\'o \cite{FR13} studied the maximum number of hyperedges that an $r$-uniform linear hypergraph can have without containing a certain $r\times r$ hypergraph grid; see also \cite{GS22}.

Our main result determines $\ex(n,F_t)$ up to a multiplicative constant, adding grids to the rather short list of families of bipartite graphs whose Tur\'an number is known.

\begin{theorem} \label{thm:turan for grid}
    For any integer $t\geq 2$, there exist positive real numbers $c=c(t)$ and $C=C(t)$ such that $$cn^{3/2}\leq \ex(n,F_t)\leq Cn^{3/2}.$$
\end{theorem}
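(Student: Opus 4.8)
The lower bound is immediate: since $t\ge 2$, the grid $F_t$ contains a $4$-cycle, so every $C_4$-free graph is $F_t$-free and hence $\ex(n,F_t)\ge \ex(n,C_4)\ge cn^{3/2}$, using the classical construction of Erd\H{o}s--R\'enyi and Brown (the polarity graph of a projective plane). All the work is in the upper bound. So let $G$ be an $F_t$-free graph on $n$ vertices and suppose, for contradiction, that $e(G)\ge Cn^{3/2}$ for a large constant $C=C(t)$; our goal is to exhibit a copy of $F_t$ in $G$. Since $F_t$ is $2$-degenerate with maximum degree $4$, the results of F\"uredi \cite{Fur91} and Alon--Krivelevich--Sudakov \cite{AKS03} already give $e(G)=O(n^{7/4})$, but we must push the exponent all the way down to $3/2$. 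The first step is a sequence of standard cleaning reductions: by passing to a subgraph we may assume $G$ has minimum degree at least $\tfrac12\cdot\tfrac{2e(G)}{n}\ge C\sqrt n$, and, after a further regularisation, that $G$ is almost regular of degree $d\ge C\sqrt n$ on $n'\le n$ vertices. The heart of the matter is then to locate inside such a dense, almost-regular host enough structure to assemble the whole $t\times t$ grid.

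The plan for the assembly is to build the $t$ rows of $F_t$ one at a time. Suppose rows $1,\dots,j$ have been embedded onto vertex-disjoint paths $R_1,\dots,R_j$, with $R_j=y_1\cdots y_t$; we then need a path $z_1\cdots z_t$, disjoint from $R_1\cup\dots\cup R_j$, with $z_i$ adjacent to $y_i$ for every $i$, since this is exactly how consecutive rows of the grid are joined. For this to be possible $R_j$ must not be arbitrary but must have been chosen with many ``parallel continuations'', so the real content is a structural lemma: a dense almost-regular graph contains a path that is \emph{robustly extendable} in the above sense --- informally, a block in which between many pairs of vertices there are many short internally-disjoint paths avoiding any prescribed bounded ``forbidden'' set --- together with a bookkeeping argument ensuring that the robustness budget survives all $t$ rounds. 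To produce such a block one uses a standard double-counting argument: $e(G)\ge Cn^{3/2}$ forces $\Omega(n^2)$ copies of $C_4$, and this abundance of $4$-cycles, once organised into a clean grid-like pattern (for instance via a dependent-random-choice type selection), should yield the required robustly extendable path. (One could instead attempt a purely homomorphism-counting version --- lower-bounding $\hom(F_t,G)$ via Sidorenko's conjecture for grids \cite{KLL16} and subtracting the degenerate homomorphisms factoring through proper quotients of $F_t$ --- but the naive form of this loses too much.)

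The main obstacle is that a direct execution of this scheme is lossy: each of the $t$ extension steps, and the passage from ``many $C_4$'s'' to a clean grid-like block, costs a lower-order factor, so one only obtains $\ex(n,F_t)\le Cn^{3/2}(\log n)^{O_t(1)}$ (or $\le C_\eps n^{3/2+\eps}$ for every $\eps>0$). Removing this loss to reach the exact exponent $3/2$ is where the tensor power trick enters, in a non-standard guise: rather than amplifying an inequality between two fixed quantities, one amplifies the whole extremal configuration by passing to a suitable graph product of the host with itself (or with a fixed auxiliary graph), and exploits that the relevant ``grid-freeness'' parameter behaves multiplicatively under this product, so that taking $k$-th roots of the iterated bound and letting $k\to\infty$ annihilates the lower-order term. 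Pinning down the correct product operation and verifying that it preserves a quantitative version of $F_t$-freeness is, I expect, the subtlest point of the argument, and together with the robust-extendability lemma it is where essentially all the difficulty lies.
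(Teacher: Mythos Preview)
Your outline captures the right three-stage architecture --- regularise, obtain an $n^{3/2}(\log n)^{O_t(1)}$ bound by iterated extension, then tensorise to kill the logarithms --- and that is exactly how the paper proceeds. But the two technical steps you flag as ``where essentially all the difficulty lies'' are not just subtle, they are the whole proof, and your sketch does not contain the ideas that make them work.

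For the extension step, your ``robustly extendable path'' via an abundance of $C_4$'s and dependent random choice is too coarse. The actual mechanism is more delicate: the paper counts not $C_4$'s but $2\times t$ \emph{ladders} $(x_1,y_1,\dots,x_t,y_t)$ carrying extra data, namely a tuple $(s_1,\dots,s_{t-1})$ such that $d_G(x_{i+1},y_i)\le s_i$ for each $i$. These $s_i$ are not fixed in advance but are produced by dyadic pigeonholing during the inductive construction of the ladders, and one needs a preliminary cleaning lemma (Lemma~\ref{lem:cleaned subgraph}) ensuring that along every edge there are many neighbours of \emph{high} codegree, so that the $s_i$ come out large enough. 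The point of the codegree ceilings is that when you extend a fixed $x$-side to a full ladder one vertex at a time, each new $y_i$ has at most $s_{i-1}$ choices, and therefore the number of extensions through any one prescribed vertex is at most $\Delta\cdot\prod_{i\ne j}s_i$, which is beaten by the typical number $\approx \Delta\cdot\prod_i s_i/(\log n)^{t-1}$ of extensions precisely because $s_j\gg (\log n)^{t-1}$. Without this codegree bookkeeping there is no way to avoid collisions when gluing a new row onto a partially built grid.

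For the tensor step, your description --- prove the lossy bound, apply it to a product power, take $k$-th roots --- is the natural first guess, and the paper explicitly says this does not work (``we cannot deduce the correct bound on $\ex(n,F_t)$ by proving the weaker bound and applying it as a black box to $G^k$''). What is done instead is to run the entire ladder-counting argument directly in the tensor power $G^k$, with the definition of a good ladder demanding coordinatewise distinctness and coordinatewise codegree lower bounds. Then Lemma~\ref{lem:few bad ladders} bounds extensions that agree with prescribed vertices on an arbitrary \emph{subset} $J\subseteq[k]$ of coordinates, with a gain of $\alpha^{-|J|}$. When one finally embeds the tree of rows into an auxiliary graph on paths in $G^k$, a union bound over subsets $J$ of size $k/(rt)^2$ shows that some coordinate yields genuinely distinct vertices, hence a copy of $F_t$ in $G$. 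The logarithmic losses are absorbed because $k$ is chosen with $2^k\gg k\log n$, not by taking $k$-th roots of a black-box inequality. So your last paragraph points in the right direction but misses the actual mechanism.
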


In fact, our proof works for a slightly more general class of graphs. For graphs $G$ and $H$, the \emph{Cartesian product} $G\Box H$ is the graph whose vertex set is $V(G)\times V(H)$ and in which $(u,v)$ is adjacent to $(u',v')$ if and only if $u=u'$ and $vv'\in E(H)$ or $uu'\in E(G)$ and $v=v'$. Writing $P_t$ for the path with $t$ vertices, our Theorem \ref{thm:turan for grid} states that $\ex(n,P_t\Box P_t)=\Theta_t(n^{3/2})$. We can prove the following more general version.

\begin{theorem} \label{thm:turan for products}
    For any tree $T$ and any path $P$ (each with at least one edge), there exist positive real numbers $c$ and $C$ such that $$cn^{3/2}\leq \ex(n,T\Box P)\leq Cn^{3/2}.$$
\end{theorem}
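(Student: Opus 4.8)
\emph{Proof proposal.} The lower bound is immediate: since $T$ and $P$ each contain an edge, $C_4=K_2\Box K_2$ is a subgraph of $T\Box P$, so every $C_4$-free graph is $(T\Box P)$-free and hence $\ex(n,T\Box P)\ge\ex(n,C_4)\ge cn^{3/2}$ by the classical constructions of Erd\H{o}s--R\'enyi and Brown. All the work is in the upper bound, which amounts to finding a constant $C=C(T,P)$ such that every $n$-vertex graph $G$ with $e(G)\ge Cn^{3/2}$ contains $H:=T\Box P$.

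The key structural feature of $H$ I would use is a layering. Fixing a root $r$ of $T$ and setting $\mathrm{lev}(w,j):=\mathrm{dist}_T(r,w)+j$ partitions $V(H)$ into layers $L_1,\dots,L_m$ with $m\le|V(T)|+|V(P)|$; one checks at once that $|L_i|\le|V(T)|$, that $H$ has no edge inside a layer and every edge joins consecutive layers, and that each vertex of $L_i$ has at most $2$ neighbours in $L_{i-1}$ (its $P$-predecessor and its $T$-parent) and at most $\Delta(T)+1$ neighbours in $L_{i+1}$. So $H$ is a bounded-width layered graph with back-degree at most $2$ into a previous layer of bounded size.

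\textbf{Step 1: a weak bound.} I would first prove $\ex(n,H)\le n^{3/2+o(1)}$ (say $\le C'n^{3/2}(\log n)^{O_H(1)}$). After passing to an almost-regular bipartite subgraph, embed $H$ layer by layer. Since a layer is an independent set, each of its (constantly many) vertices is placed independently as a common neighbour of the at most two already-embedded images of its $L_{i-1}$-neighbours; the only genuine constraint is that these codegrees be large. I would therefore carry the invariant ``for every pair of images of $L_{i-1}$-vertices that will later be required to have a common neighbour, the codegree in $G$ is at least some richness parameter $\rho_i$'', and re-establish it when processing $L_i$ by a dependent-random-choice type selection inside the neighbourhood of the earlier vertex that is the common ``ancestor'' of that pair. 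Following how $\rho_i$ degrades over the bounded number of layers, while paying for the possibility that a dense $G$ is only locally $C_4$-rich, yields the claimed $n^{3/2+o(1)}$ bound.

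\textbf{Step 2: removing the loss via a tensor power trick, and the main obstacle.} To go from $n^{3/2+o(1)}$ to $O(n^{3/2})$ I would bootstrap. If $\ex(n,H)/n^{3/2}$ were unbounded, pick $G$ that is $H$-free on $n$ vertices with normalised edge density $\phi:=e(G)/n^{3/2}$ large. The aim is a product operation turning $G$ into an $H$-free graph on $\mathrm{poly}(n)$ vertices whose normalised density is $\gtrsim\phi^2$ (up to constants); iterating it $k$ times the normalised density is $\gtrsim(c\phi)^{2^k}=N^{\Omega(1)}$ (with $N=n^{2^k}$ the vertex count), which contradicts Step 1. The natural model is the tensor square $G\times G$, which has $n^2$ vertices and $2e(G)^2$ edges --- exactly the right density. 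The obstacle, and where I expect the genuinely new idea to be needed, is that $G\times G$ is \emph{not} $H$-free just because $G$ is: a copy of $H$ in $G\times G$ is a pair of homomorphisms $\phi_1,\phi_2\colon H\to G$ with $(\phi_1,\phi_2)$ injective, while neither $\phi_1$ nor $\phi_2$ need be injective. One must destroy these spurious copies --- presumably by tensoring instead with a carefully layered auxiliary structure (or restricting $G\times G$ to one) tailored to the layering of $H$, so that any homomorphism of $H$ into the auxiliary part is forced to be ``layerwise spread'', which pins the companion map into $G$ down to being injective on each layer and, after composing with a second gadget, injective outright --- all while keeping the edge density. Making this work, together with the regularisation and the room-and-freshness bookkeeping of Step 1, is the crux of the proof.
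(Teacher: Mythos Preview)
Your Step~2 contains the genuine gap, and it is precisely the crux of the problem. You correctly identify that the naive bootstrap fails because $G\times G$ need not be $H$-free when $G$ is; but your proposed fix via an ``auxiliary gadget forcing injectivity'' is not an argument, and the paper explicitly remarks that one \emph{cannot} deduce the sharp bound by first proving the weak bound and then applying it as a black box to $G^k$. No amount of restricting $G\times G$ will make it $H$-free while preserving the density: a homomorphism $H\to G$ paired with an injection $H\to$(gadget) is still a copy of $H$ in the product, and $G$, being dense and $C_4$-rich, is full of homomorphic images of $H$.

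The paper's approach differs in kind. It does not iterate and does not try to keep $G^k$ free of $H$; it works in $G^k$ for one large $k=k(n)$ and uses the tensor power as $k$ parallel embedding attempts in $G$, arguing that one must succeed. The building block is not your layering of $T\Box P_t$ but the $2\times t$ ladder $P_2\Box P_t$. In $G^k$ one counts ``good'' ladders $(\vx_1,\vy_1,\dots,\vx_t,\vy_t)$ satisfying simultaneously (i) upper bounds $d_{G^k}(\vx_{i+1},\vy_i)\le s_i$ in $G^k$, (ii) lower bounds $d_G(\vx_{i+1}(j),\vy_i(j))\ge\alpha$ in every coordinate, and (iii) coordinatewise distinctness. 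The $s_i$ come from dyadic pigeonholing, and the entire polylog loss is absorbed here. One then forms an auxiliary graph $\cH$ whose vertices are $t$-paths in $G^k$ and whose edges are good ladders, and embeds $T$ into $\cH$ greedily. The key lemma --- the idea your proposal is missing --- is that for a fixed path $(\vx_1,\dots,\vx_t)$ and any set $J\subset[k]$ of coordinates, the number of good-ladder extensions hitting prescribed $G$-vertices in every coordinate of $J$ drops by a factor $\alpha^{-|J|}$: condition (ii) supplies the per-coordinate codegree that is saved when that coordinate is frozen, while condition (i) controls the remaining coordinates jointly. Taking $\alpha$ a huge constant and $k$ large enough that $\alpha^{k/(rt)^2}$ beats the $(\log n^k)^{t-1}$ loss, the greedy embedding of $T$ into $\cH$ can avoid having more than $k/(rt)^2$ coordinate-collisions for any pair of $H$-vertices; summing, some coordinate $j\in[k]$ has no collisions at all, and projecting to that coordinate yields a genuine copy of $T\Box P_t$ in $G$.

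In short, the tensor power is not used to amplify density and reach a contradiction with a weak bound; it is used to run many copies of the embedding argument in parallel, with the interplay of conditions (i) and (ii) making ``a collision in a $|J|$-fraction of coordinates'' cheap enough to union-bound away. Your layer-by-layer scheme in Step~1 also diverges from the paper (which builds $T\Box P_t$ column by column via ladders rather than level by level), but that is a secondary issue compared to the hole in Step~2.
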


\noindent
Since $T\Box P$ contains the $4$-cycle as a subgraph, the lower bound in Theorem \ref{thm:turan for products} follows from $\ex(n,C_4)=\Theta(n^{3/2})$ (see, e.g., \cite{KST54}).

The proof of Theorem \ref{thm:turan for grid} consists of two parts. First, we prove that $\ex(n, F_t) = O_t(n^{3/2} (\log n)^t)$. Then we get rid of the $(\log n)^t$ factor using the tensor power trick. This technique was used by Alon and Ruzsa \cite{AlRu99} to give an elementary proof of the celebrated Sidorenko’s conjecture for trees, which implies the Blakley-Roy matrix inequality \cite{BlPr65}. The technique has also been used in many other areas, see, e.g.,  Tao \cite{Tao08}, who has collected a number of these applications. While the tensor power trick is usually used to remove a constant factor, in this problem we manage to remove a factor which is polylogarithmic in $n$. 
Our way of using this tool somewhat differs from previous applications. Notably, we cannot deduce the correct bound on $\ex(n, F_t)$ by proving the aforementioned weaker bound and applying it as a black box to the $k^{th}$ tensor power $G^k$ of the given graph $G$; instead we work directly with $G^k$ while keeping in mind the original graph $G$. To the best of our knowledge, this is the first application of the tensor power trick to Tur\'an-type problems.

\subsection{An outline of the proof}

In this section we give a sketch of the proof of $\ex(n,F_t)=O_t(n^{3/2}(\log n)^{t})$. Let $G$ be an $n$-vertex graph with $\alpha n^{3/2}$ edges, where $\alpha=(\log n)^t$. We want to show that $G$ contains the $t\times t$ grid as a subgraph. We may assume by standard reduction results that $G$ is almost regular, that is, $\Delta(G)\leq K\delta(G)$ for some absolute constant $K$.

An important element of the proof will be a careful count of the $2\times t$ grids (i.e. subgraphs isomorphic to $P_2\Box P_t$) in $G$. This graph will be the building block for our $t\times t$ grids. More precisely, note that we can obtain an $(s+1)\times t$ grid by gluing a $2\times t$ grid to an $s\times t$ grid along some side of length $t-1$. Thus, by successively attaching $2\times t$ grids, we end up with a $t\times t$ grid as long as the attached $2\times t$ grids do not intersect the already existing grid in any vertex other than the ones on the boundary that are to be glued.

It is not too hard to see that $G$ contains roughly at least as many $2\times t$ grids as a random graph with the same edge density, namely $\alpha^{3t-2}n^{t/2+1}$. By the maximum degree condition, the number of paths of length $t-1$ in $G$ is at most about $\alpha^{t-1}n^{t/2+1/2}$ (ignoring absolute constant factors). This is promising as it shows that on average a path of length $t-1$ can be extended to many (at least $\alpha^{2t-1}n^{1/2}$) $2\times t$ grids. As long as we can make sure that not too many of these extensions contain any fixed vertex, we can conclude that $G$ contains a $t\times t$ grid. However, this is a significant obstacle since in general we do not have any good upper bound on the number of extensions of a path to a $2\times t$ grid which contain a given vertex. For this reason, we will look at a certain subfamily $\mathcal{F}$ of all $2\times t$ grids such that $|\mathcal{F}|$ is still large enough, but for which we will be able to bound efficiently the number of members of $\mathcal{F}$ that extend a fixed path of length $t-1$ and contain a fixed vertex.

In order to describe $\mathcal{F}$, it is helpful to label the vertices of the $2\times t$ grid. Let us call them $x_1,x_2,\dots,x_t,y_1,y_2,\dots,y_t$ where $x_1x_2\dots x_t$ and $y_1y_2\dots y_t$ are paths of length $t-1$, and $x_iy_i$ is an edge for every $i$. Now $\mathcal{F}$ will consist of those $2\times t$ grids in $G$ for which the codegree of $x_{i+1}$ and $y_i$ (in $G$) is at most $s_i$ for each $1\leq i\leq t-1$, where $s_1,s_2,\dots,s_{t-1}$ depend on the graph $G$. We claim that for suitable choices of $s_1,\dots,s_{t-1}\geq \alpha$, $|\mathcal{F}|$ is at least about $n(\alpha n^{1/2})^ts_1s_2\dots s_{t-1}/(\log n)^{t-1}$. Note that there is a trade-off between the control on the co-degrees and the lower bound on $|\mathcal{F}|$: the weaker control we have on the co-degrees, the bigger $\mathcal{F}$ is guaranteed to be.

Before we explain how we obtain the lower bound on $|\mathcal{F}|$, let us see how this guarantees that a $t\times t$ grid can be found.
The number of paths of length $t-1$ in $G$ is about $n(\alpha n^{1/2})^{t-1}$, so on average a path of length $t-1$ extends to at least $\alpha n^{1/2} s_1s_2\dots s_{t-1}/(\log n)^{t-1}$ members of $\mathcal{F}$. What we need to show is that only a small proportion of those can contain a fixed vertex. For this, fix the (images of the) vertices $x_1,\dots,x_t$ and let us count how many members of $\mathcal{F}$ extend these vertices which map $y_j$ to a fixed vertex $u\in V(G)$ where $2\leq j\leq t$ is given (the case $j=1$ is slightly different but arguably easier). Note that $y_1$ has to be a neighbour of $x_1$, so there are at most $\Delta(G)\approx \alpha n^{1/2}$ choices for it. Then $y_2$ has to be a common neighbour of $x_2$ and $y_1$, both of which are already fixed and need to have codegree at most $s_1$ by the definition of $\mathcal{F}$, so there are at most $s_1$ choices for $y_2$. Similarly, since $y_i$ needs to be a common neighbour of $x_i$ and $y_{i-1}$, once $y_1,y_2,\dots,y_{i-1}$ have been chosen, there are at most $s_{i-1}$ choices for $y_i$. Moreover, there is a unique choice for $y_j$ as we are counting only those extensions which map $y_j$ to $u$. Altogether, we get at most $\alpha n^{1/2}s_1s_2\dots s_{j-2}s_j\dots s_{t-1}$ extensions mapping $y_j$ to $u$. Since $s_{j-1}\geq \alpha=(\log n)^t$, we have that $\alpha n^{1/2}s_1s_2\dots s_{j-2}s_j\dots s_{t-1}=o(\alpha n^{1/2} s_1s_2\dots s_{t-1}/(\log n)^{t-1})$, so the number of extensions of any $x_1,\dots,x_t$ to a member of $\mathcal{F}$ mapping $y_j$ to $u$ is indeed negligible compared to the typical number of extensions of a fixed $x_1,\dots,x_t$ to a member of $\mathcal{F}$. Moreover, since the conditions on $\mathcal{F}$ are symmetrical with respect to $x$ and $y$, only a small proportion of all extensions of a given embedding of the $y_i$'s to a member of $\mathcal{F}$ use any given vertex. It is not hard to see that this implies that a $t\times t$ grid can be found in $G$.

We conclude this outline by sketching how to find enough $2\times t$ grids satisfying the codegree conditions. We first choose $x_1$ and $y_1$ for which there are $n\cdot \alpha n^{1/2}$ possibilities. Then we choose $x_2$ to be a neighbour of $x_1$ which can be done in $\alpha n^{1/2}$ many ways. By dyadic pigeonholing, there is some $s_1$ such that for at least $1/\log n$ proportion of all the partial $2\times t$ grids that were constructed so far, the codegree of $x_2$ and $y_1$ is around $s_1$ (up to a constant factor). For each of these partial grids, we can choose $y_2$ in around $s_1$ many ways. Then we choose $x_3$ to be an arbitrary neighbour of $x_2$ and again apply dyadic pigeonholing to keep at least $1/\log n$ proportion of our partial grids in all of which $x_3$ and $y_2$ have codegree around $s_2$. Continuing this process, we end up with the desired number of $2\times t$ grids. In turning this into a rigorous proof, the difficulty is making sure that $s_1,s_2,\dots,s_{t-1}$ are at least $\alpha$ (or even that there is always at least one choice for $y_i$ which is different from $x_1,y_1,x_2,y_2,\dots,x_{i-1},y_{i-1}$).

\textbf{Notation.} For a graph $G$, we use $\delta(G), \bar{d}(G)$ and $\Delta(G)$ to denote the minimum, average and maximum degree of $G$, respectively. For vertices $u,v\in V(G)$, we write $d_G(u,v)$ for the codegree of $u$ and $v$ in $G$. We omit floor and ceiling signs whenever this does not affect the argument. All logarithms are to the base $2$.
\section{The proof of Theorem \ref{thm:turan for products}}

A graph $G$ is called $K$-almost-regular if $\Delta(G) \leq K\delta(G)$. A well-known regularization lemma of Erd\H{o}s and Simonovits \cite{ESi69} allows us to restrict our attention to almost-regular graphs. We use the following version of the lemma proved by Jiang and Seiver.

\begin{lemma}[Jiang--Seiver \cite{JS12}] \label{lem:almost-regularization}
  Let $\eps,c$ be positive reals, where $\eps<1$ and $c\geq 1$. Let $n$ be a positive integer that is sufficiently large as a function of $\eps$. Let $G$ be a graph on $n$ vertices with $e(G)\geq cn^{1+\eps}$. Then $G$ contains a $K$-almost-regular subgraph $G'$ on $m\geq n^{\frac{\eps-\eps^2}{2+2\eps}}$ vertices such that $e(G')\geq \frac{2c}{5}m^{1+\eps}$ and $K=20\cdot 2^{\frac{1}{\eps^2}+1}$.
\end{lemma}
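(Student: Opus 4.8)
The plan is to prove this by the graph-regularization technique of Erd\H{o}s and Simonovits \cite{ESi69} (which is also the route taken in \cite{JS12}); since the substance is bookkeeping rather than a single clever idea, I describe the skeleton and flag where the care goes. Throughout the argument the quantity to track is the \emph{normalized density} $\rho(H):=e(H)/v(H)^{1+\eps}$ of the current subgraph $H$; it starts at $\rho(G)\geq c$ and satisfies $\rho(H)\leq \tfrac12 v(H)^{1-\eps}$ for every graph $H$. One repeatedly applies two operations. The first is a \emph{cleaning} operation: delete vertices whose degree is too small relative to the current average degree, forcing a minimum-degree lower bound of order $\rho(H)\,v(H)^{\eps}$. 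The key point is that, since $v^{1+\eps}-(v-1)^{1+\eps}=(1+o(1))(1+\eps)v^{\eps}$, each such deletion decreases $\rho$ only by a negligible relative amount; the cumulative loss from all cleaning steps is precisely what accounts for the gap between $c$ and $\tfrac{2c}{5}$ in the conclusion, and one also checks cleaning never destroys all edges. After cleaning one has a subgraph $H'$ with $\delta(H')\gtrsim \rho(H')\,v(H')^{\eps}$ and $e(H')\geq \tfrac{2c}{5}v(H')^{1+\eps}$.

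If $\Delta(H')\leq K\delta(H')$ then $H'$ is almost-regular and, after verifying its size, we are done. Otherwise there are vertices of degree exceeding $K\delta(H')$, and the second operation applies: pass to a strictly denser subgraph of $H'$, either by deleting the high-degree vertices (when they carry fewer than half the edges, so that $\Delta$ strictly drops and we re-clean) or by restricting to the subgraph they span (when they carry at least half the edges, in which case the number of such vertices is small and, using the minimum-degree bound coming from cleaning, the normalized density jumps up by a definite factor). Since $\rho$ starts at $\Theta(1)$, strictly increases each time the second operation is invoked, and can never exceed $\tfrac12 n^{1-\eps}$, that operation is used only a bounded number of times; quantifying this bounds the total blow-up of the ratio between the extreme degrees, and it is exactly this bound which produces the value $K=20\cdot 2^{1/\eps^2+1}$.

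The crux, and the only genuinely delicate part, is calibrating the two thresholds (how small a degree triggers cleaning, and what counts as ``high'' degree) so that three things hold simultaneously: (a) $\rho$ never falls below $\tfrac{2c}{5}$; (b) the second operation occurs few enough times that the accumulated degree ratio stays below $K$; and (c) the total loss of vertices over all cleaning and restriction steps leaves at least $n^{\beta}$ vertices, where $\beta=\tfrac{\eps-\eps^2}{2+2\eps}$ is exactly the exponent squeezed out of this last estimate together with the hypothesis that $n$ is large in terms of $\eps$. Each individual inequality is routine once the thresholds are fixed; the only real difficulty is that the thresholds must be chosen to make all three constraints work at once. As this lemma is stated and proved in \cite{JS12}, one may equally well simply invoke their argument verbatim.
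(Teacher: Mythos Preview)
The paper does not prove this lemma at all; it is quoted verbatim from Jiang--Seiver \cite{JS12} and used as a black box. Your sketch of the Erd\H{o}s--Simonovits regularization procedure is consistent with the argument in \cite{JS12}, and your closing remark that one may simply invoke \cite{JS12} is exactly what the paper does.
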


As discussed in the proof outline, the first step of our argument is to find many $2 \times t$ grids with bounds on the codegree of certain pairs. In order to do this, we would like to assume that for each edge $uv,$ there are $\Omega(\bar{d}(G))$ neighbours $w$ of $u$ such that $d(v, w) \ge C,$ where $C$ is some large constant. If $C$ is much larger than $t,$ then this allows us to extend a $2 \times s$ grid to a $2 \times (s+1)$ grid in $\Omega(\bar{d}(G) \cdot C)$ many ways. The following lemma allows us to find a large subgraph with the  property mentioned above.

\begin{lemma} \label{lem:cleaned subgraph}
    Let $G$ be an $n$-vertex graph with $\alpha n^{3/2}$ edges. Then $G$ has a subgraph $H$ on the same vertex set such that $e(H)\geq \frac{1}{2}\alpha n^{3/2}$ and for any $uv\in E(H)$, $u$ has at least $\frac{1}{8}\alpha n^{1/2}$ neighbours $w$ in $H$ such that $d_{H}(v,w)\geq \alpha^2/32$.
\end{lemma}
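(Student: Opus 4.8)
The plan is to produce $H$ from $G$ by iteratively deleting edges that violate the required condition, and to bound the number of deletions by a potential argument. Call an ordered pair $(u,v)$ with $uv\in E(H)$ \emph{bad} if $d_H(u)<\tfrac14\alpha n^{1/2}$, or if $u$ has fewer than $\tfrac18\alpha n^{1/2}$ neighbours $w$ in $H$ with $d_H(v,w)\ge\alpha^2/32$; in the latter case we also say $(u,v)$ is \emph{codegree-bad}. Set $H:=G$, and while some edge of $H$ has a bad orientation, pick such an edge together with a bad orientation $(u,v)$ of it, delete the edge, and record the deletion as \emph{low-degree type} if $d_H(u)<\tfrac14\alpha n^{1/2}$ held at that moment and as \emph{codegree type} otherwise (so $(u,v)$ was codegree-bad and $d_H(u)\ge\tfrac14\alpha n^{1/2}$). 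When the process halts, no edge of $H$ has a bad orientation, which is exactly the assertion of the lemma; hence it is enough to show that strictly fewer than $\tfrac12\alpha n^{3/2}$ edges are deleted.

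Bounding the low-degree deletions is easy. Fix a vertex $x$. After the first deletion (if any) of an edge in low-degree mode with low-degree endpoint $x$, the vertex $x$ has fewer than $\tfrac14\alpha n^{1/2}$ incident edges, and every later low-degree deletion with low-degree endpoint $x$ removes one of them; so at most $\tfrac14\alpha n^{1/2}$ edges are ever deleted with low-degree endpoint $x$, and summing over the $n$ vertices gives fewer than $\tfrac14\alpha n^{3/2}$ low-degree deletions.

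The codegree-type deletions are the heart of the matter, and here the obvious idea — bound the number of pairs $\{b,c\}$ with $d_H(b,c)<\alpha^2/32$ and charge each deletion to such pairs it destroys — breaks down, because deleting an edge \emph{lowers} codegrees and can therefore \emph{create} new pairs of small codegree; this non-monotonicity is the main obstacle. I would get around it with the \emph{truncated} potential
$$\phi(H)=\sum_{\{b,c\}}\min\bigl(d_H(b,c),\ \alpha^2/32\bigr),$$
the sum being over unordered pairs of vertices. Deleting an edge only decreases codegrees, and $s\mapsto\min(s,\alpha^2/32)$ is non-decreasing, so $\phi$ never increases during the process; moreover $\phi(G)\le\binom n2\cdot\alpha^2/32<\alpha^2 n^2/64$ trivially. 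The key claim is that each codegree-type deletion decreases $\phi$ by at least (roughly) $\tfrac18\alpha n^{1/2}$: when we delete $uv$ with $(u,v)$ codegree-bad and $d_H(u)\ge\tfrac14\alpha n^{1/2}$, the vertex $u$ has at least $\tfrac14\alpha n^{1/2}-\tfrac18\alpha n^{1/2}=\tfrac18\alpha n^{1/2}$ neighbours $w$ (which we may take different from $v$) with $d_H(v,w)<\alpha^2/32$; for each such $w$, $u$ is a common neighbour of $v$ and $w$, so deleting $uv$ drops $d(v,w)$ by exactly $1$, and since $d_H(v,w)<\alpha^2/32$ this drops $\min(d(v,w),\alpha^2/32)$ by $1$. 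Summing over these $w$, $\phi$ falls by at least $\tfrac18\alpha n^{1/2}$ at this step. As the decreases at distinct steps telescope and $\phi\ge 0$, the number of codegree-type deletions is at most $\phi(G)\big/\bigl(\tfrac18\alpha n^{1/2}\bigr)<\tfrac18\alpha n^{3/2}$.

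Altogether fewer than $\tfrac14\alpha n^{3/2}+\tfrac18\alpha n^{3/2}<\tfrac12\alpha n^{3/2}$ edges are deleted, so the final graph $H$ has more than $\tfrac12\alpha n^{3/2}$ edges and, by construction, the stated codegree property; the comfortable slack between $\tfrac38$ and $\tfrac12$ also absorbs the minor rounding losses (such as the insistence $w\ne v$). The only essential point beyond bookkeeping is the truncation in $\phi$, which restores monotonicity while keeping $\phi(G)$ of the right order of magnitude.
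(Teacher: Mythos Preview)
Your proof is correct and follows essentially the same approach as the paper: an iterative edge-deletion process, with the low-degree deletions bounded trivially and the codegree-type deletions bounded by charging each deletion to at least $\tfrac18\alpha n^{1/2}$ pairs $\{v,w\}$ of small codegree, each of which can absorb at most $\alpha^2/32$ charges. Your truncated potential $\phi(H)=\sum_{\{b,c\}}\min(d_H(b,c),\alpha^2/32)$ is simply a clean repackaging of the paper's direct counting argument (and your single-edge low-degree deletions versus the paper's batch deletion of all edges at a low-degree vertex is an immaterial difference).
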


\begin{proof}
Let us define a sequence $G_0\supset G_1\supset G_2\supset \dots $ of graphs as follows. We set $G_0=G$. Having defined $G_i$, if there is a vertex $u\in V(G_i)$ with $1\leq d_{G_i}(u)\leq \frac{1}{4}\alpha n^{1/2}$, then choose such a vertex and let $G_{i+1}$ be the graph obtained from $G_i$ by deleting all edges incident to $u$. Call this deletion type 1. If no such vertex exists but there is an edge $uv\in E(G_i)$ for which $u$ has less than $\frac{1}{8}\alpha n^{1/2}$ neighbours $w$ in $G_i$ satisfying $d_{G_i}(v,w)\geq \alpha^2/32$, then let $G_{i+1}=G_i-uv$. Call this deletion type 2. If no such vertex or edge exists, then set $H=G_i$ and terminate the process.

Now for any $uv\in E(H)$, it follows immediately from the definition that $u$ has at least $\frac{1}{8}\alpha n^{1/2}$ neighbours $w$ in $H$ such that $d_{H}(v,w)\geq \alpha^2/32$. It remains to check that $e(H)\geq \frac{1}{2}\alpha n^{3/2}$, or equivalently that $e(G)-e(H)\leq \frac{1}{2}\alpha n^{3/2}$.

We shall prove that at most $\frac{1}{4}\alpha n^{3/2}$ edges are deleted in type 1 deletions and at most $\frac{1}{4}\alpha n^{3/2}$ edges are deleted in type 2 deletions. Indeed, there are at most $n$ type 1 deletion steps and each of them removes at most $\frac{1}{4}\alpha n^{1/2}$ edges, so it is clear that at most $\frac{1}{4}\alpha n^{3/2}$ edges are removed during type 1 deletions.

Since in each type 2 deletion, we remove precisely one edge, it suffices to prove that there are at most $\frac{1}{4}\alpha n^{3/2}$ such deletions throughout the process. Assume that the edge $uv$ gets deleted from $G_i$ because $u$ has less than $\frac{1}{8}\alpha n^{1/2}$ neighbours $w$ in $G_i$ such that $d_{G_i}(v,w)\geq \alpha^2/32$. Since no deletion of type 1 was applied to $G_i$, but there is an edge in $G_i$ incident to $u$, we have $d_{G_i}(u)>\frac{1}{4}\alpha n^{1/2}$. It follows that $u$ has more than $\frac{1}{8}\alpha n^{1/2}$ neighbours $w$ in $G_i$ such that $d_{G_i}(v,w)< \alpha^2/32$. For each such $v$, $u$ is a common neighbour of $v$ and $w$, so $d_{G_{i+1}}(v,w)=d_{G_i}(v,w)-1$. However, the condition $d_{G_i}(v,w)<\alpha^2/32$ shows that any pair $(v,w)$ of vertices can ``play this role" in at most $\alpha^2/32$ type~2 deletions. Thus, there are at most $\frac{n^2\cdot \alpha^2/32}{\frac{1}{8}\alpha n^{1/2}}=\frac{1}{4}\alpha n^{3/2}$ type 2 deletions in the process.
\end{proof}

Combining the previous two lemmas, we have the following.

\begin{lemma} \label{lem:final-cleaning}
    Let $G$ be an $n$-vertex graph with $\alpha n^{3/2}$ edges, where $\alpha \ge 10$ and $n$ is sufficiently large. Then $G$ has a subgraph $H$ on $m \ge n^{1/12}$ vertices such that for some $\alpha' \ge \alpha / 20,$ $e(H) \ge \alpha' m^{3/2}, \Delta(H) \le 12000 \alpha' m^{1/2}$ and for any $uv \in E(H),$ $u$ has at least $\alpha' m^{1/2}$ neighbours $w$ in $H$ satisfying $d_H(v, w) \ge \alpha'.$
\end{lemma}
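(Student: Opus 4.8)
The plan is to apply the two preceding lemmas in succession, in the right order. First I would invoke Lemma~\ref{lem:almost-regularization} with $\eps=1/2$ and $c=\alpha$; this is legitimate since $\alpha\ge 10\ge 1$ and $n$ is taken sufficiently large. For $\eps=1/2$ one has $\frac{\eps-\eps^2}{2+2\eps}=\frac{1/4}{3}=\frac{1}{12}$ and $K=20\cdot 2^{1/\eps^2+1}=20\cdot 2^{5}=640$, so the lemma produces a $640$-almost-regular subgraph $G'$ on $m\ge n^{1/12}$ vertices with $e(G')\ge \frac{2\alpha}{5}m^{3/2}$. I would then set $\alpha'':=e(G')/m^{3/2}$, so that $\alpha''\ge 2\alpha/5$.

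Next I would apply Lemma~\ref{lem:cleaned subgraph} to $G'$ (playing the role of ``$G$'', with ``$n$'' replaced by $m$ and ``$\alpha$'' replaced by $\alpha''$), obtaining a subgraph $H\subseteq G'$ on the same $m$ vertices with $e(H)\ge \frac12\alpha'' m^{3/2}$ such that for every $uv\in E(H)$ the vertex $u$ has at least $\frac18\alpha'' m^{1/2}$ neighbours $w$ in $H$ with $d_H(v,w)\ge (\alpha'')^2/32$. Now put $\alpha':=\alpha''/8$; then $\alpha'\ge \alpha/20$ (and in particular $\alpha'\ge 1/2$, since $\alpha\ge 10$). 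The three numerical conclusions then follow routinely: the edge bound is $e(H)\ge \frac12\alpha'' m^{3/2}=4\alpha' m^{3/2}\ge \alpha' m^{3/2}$; for the codegree property, $u$ has at least $\frac18\alpha'' m^{1/2}=\alpha' m^{1/2}$ neighbours $w$ with $d_H(v,w)\ge (\alpha'')^2/32=2(\alpha')^2\ge \alpha'$ (using $\alpha'\ge 1/2$); and for the maximum degree, since $H\subseteq G'$ and $G'$ is $640$-almost-regular, $\Delta(H)\le \Delta(G')\le 640\,\delta(G')\le 640\,\bar d(G')=640\cdot\frac{2e(G')}{m}=1280\,\alpha'' m^{1/2}=10240\,\alpha' m^{1/2}\le 12000\,\alpha' m^{1/2}$.

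Two points deserve care, and are the closest thing to an obstacle here. First, the order of the two applications matters: the codegree guarantee of Lemma~\ref{lem:cleaned subgraph} is not preserved under passing to an arbitrary subgraph — both the codegrees $d_H(v,w)$ and the witnessing neighbourhoods can shrink — so the regularization must be performed \emph{before} the cleaning step, not after. Second, the maximum degree of $G'$ is controlled only by $G'$'s own edge density $\alpha''$, which Lemma~\ref{lem:almost-regularization} does not bound from above and which may be much larger than $\alpha$; this is exactly why $\alpha'$ is defined as a fixed fraction of $\alpha''$ rather than set equal to $\alpha/20$, since doing so keeps $\Delta(H)=O(\alpha' m^{1/2})$ while the required lower bound $\alpha'\ge\alpha/20$ still comes for free from $e(G')\ge\frac{2\alpha}{5}m^{3/2}$. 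Everything else is bookkeeping of the (deliberately crude) constants, and the hypothesis $\alpha\ge 10$ is precisely what makes the final inequalities — notably $2(\alpha')^2\ge\alpha'$ — go through.
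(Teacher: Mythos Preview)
Your proof is correct and follows exactly the same approach as the paper: first apply the Jiang--Seiver regularization lemma with $\eps=1/2$ to obtain a $640$-almost-regular subgraph on $m\ge n^{1/12}$ vertices, then apply Lemma~\ref{lem:cleaned subgraph} to that subgraph and set $\alpha'$ to be one-eighth of the resulting density. The only cosmetic difference is that the paper verifies $\alpha_0^2/32\ge\alpha'$ via $\alpha_0\ge 4$ rather than your equivalent route through $2(\alpha')^2\ge\alpha'$ and $\alpha'\ge 1/2$.
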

\begin{proof}
    We apply Lemma~\ref{lem:almost-regularization} to $G$ to obtain a $K$-almost-regular subgraph $H_0$ on $m \ge n^{1/12}$ vertices with $e(H_0) = \alpha_0 m^{3/2},$ where $K = 640$ and $\alpha_0 \ge \frac{2}{5} \alpha \ge 4$. Set $\alpha' = \alpha_0 / 8 \ge \alpha / 20$. Applying Lemma~\ref{lem:cleaned subgraph} to $H_0$ gives us a subgraph $H \subseteq H_0$ such that $e(H) \ge \frac{1}{2} \alpha_0 m^{3/2} \ge \alpha' m^{3/2}$ and for any $uv \in E(H)$, $u$ has at least $\frac{1}{8} \alpha_0 m^{1/2} = \alpha' m^{1/2}$ neighbours $w$ in $H$ such that $d_H(v, w) \ge \alpha_0^2 / 32 \ge \alpha'.$ Finally, $\Delta(H) \le \Delta(H_0) \le K \delta(H_0) \le K \bar{d}(H_0) = 2K\alpha_0 m^{1/2} = 16K \alpha' m^{1/2} < 12000 \alpha' m^{1/2}$, as claimed.
\end{proof}

By Lemma~\ref{lem:final-cleaning}, in order to prove Theorem~\ref{thm:turan for products} it suffices to consider host graphs $G$ on $n$ vertices, where $n$ is sufficiently large, which satisfy the following, for a sufficiently large constant $\alpha$:
\begin{enumerate}[label=\alph*), itemsep=0pt]
    \item \label{number-of-edges} $e(G) \ge \alpha n^{3/2},$
    \item \label{max-degree} $\Delta(G) \le K \alpha n^{1/2}$, where $K=12000,$
    \item \label{neighbours-with-codegree} for any $uv \in E(G),$ $u$ has at least $\alpha n^{1/2}$ neighbours $w$ in $G$ such that $d_G(v, w) \ge \alpha$.
\end{enumerate}

From now on, we fix a large constant $\alpha$ to be chosen later and let $G$ be a graph on $n$ vertices, which satisfies \ref{number-of-edges}--\ref{neighbours-with-codegree}.

Let $G^k$ be the graph whose vertex set is $V(G)^k$ and in which $(u_1,\dots,u_k)$ and $(v_1,\dots,v_k)$ are adjacent if $u_jv_j\in E(G)$ for all $1\leq j\leq k$. This graph is called the $k^{th}$ tensor power of $G$. We use bold characters to denote vertices of the graph $G^k$ and for $\vx \in G^k, j \in [k],$ we denote by $\vx(j)$ the $j^{th}$ coordinate of $\vx$, which is a vertex of $G$.

\begin{definition}
    In a graph $H$, a \emph{$t$-ladder} is a $2t$-tuple $(x_1,y_1,\dots,x_t,y_t)\in V(H)^{2t}$ such that for each $1\leq i\leq t-1$, $x_ix_{i+1},y_iy_{i+1},x_iy_i\in E(H)$ and $x_ty_t\in E(H)$.
    
    For real numbers $s_1,\dots,s_{t-1}$, we call a $t$-ladder $(\vx_1,\vy_1,\dots,\vx_t,\vy_t)$ in $G^k$ \emph{$(s_1,s_2,\dots,s_{t-1})$-good} if
    \begin{itemize}
        \item for each $1\leq i\leq t-1$, $d_{G^k}(\vx_{i+1},\vy_i)\leq s_i$,
        \item for each $1\leq i\leq t-1,1\leq j\leq k$, $d_{G}(\vx_{i+1}(j),\vy_i(j))\geq \alpha$ and
        \item for each $1\leq j\leq k$, the vertices $\vx_1(j),\dots,\vx_t(j),\vy_1(j),\dots,\vy_t(j)$ are distinct.
    \end{itemize} 
\end{definition}

\begin{lemma} \label{lem:many ladders}
    There exist real numbers $s_1, s_2, \dots, s_{t-1} \ge 1$ for which the number of $(s_1, \dots, s_{t-1})$-good $t$-ladders in $G^k$ is at least
    $$\frac{\alpha^{tk} n^{(t/2+1)k} \prod_{i=1}^{t-1} s_i}{(4^{k+1} \log n^k)^{t-1}}.$$
\end{lemma}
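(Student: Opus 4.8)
The plan is to construct the good $t$-ladders greedily, coordinate by coordinate of the ladder, mimicking the sketch in the outline but carried out directly inside $G^k$. I would build an auxiliary set of partial ladders $(\vx_1,\vy_1,\dots,\vx_i,\vy_i)$ (or $(\vx_1,\vy_1,\dots,\vx_{i+1})$ at the intermediate half-steps) and track how many of them there are. Start by choosing $\vx_1$ and $\vy_1$ with $\vx_1\vy_1\in E(G^k)$; since $e(G)\ge \alpha n^{3/2}$, the number of edges of $G^k$ is at least $(\alpha n^{3/2})^k = \alpha^k n^{(3/2)k}$, giving at least $2\alpha^k n^{(3/2)k}$ ordered such pairs. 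The key move, repeated for $i=1,\dots,t-1$: given a family of partial ladders ending at $\vx_i,\vy_i$, extend each by a neighbour $\vx_{i+1}$ of $\vx_i$ in $G^k$; property \ref{neighbours-with-codegree} applied in each of the $k$ coordinates guarantees that $\vx_i$ has at least $(\alpha n^{1/2})^k$ neighbours $\vx_{i+1}$ in $G^k$ each of whose coordinates $\vx_{i+1}(j)$ has codegree $\ge\alpha$ with $\vx_i(j)$ — but we actually need codegree $\ge\alpha$ with $\vy_i(j)$, so the correct reading is that we choose $\vx_{i+1}(j)$ among the neighbours of $\vy_i(j)$ with $d_G(\vx_{i+1}(j),\vy_i(j))\ge\alpha$, which is a set of size $\ge \alpha n^{1/2}$ by \ref{neighbours-with-codegree} applied to the edge $\vx_i(j)\vy_i(j)$ (here one also has to know $\vx_i(j)\vy_i(j)\in E(G)$, which holds since it is forced by $\vx_i\vy_i\in E(G^k)$, and additionally insist $\vx_{i+1}(j)$ is a neighbour of $\vx_i(j)$ in $G$; combining, one gets that $\vx_i$ has at least $(\alpha n^{1/2})^k$ good choices of $\vx_{i+1}$, after checking the two constraints can be imposed simultaneously). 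This produces a factor $\alpha^k n^{k/2}$ per step. Then dyadically pigeonhole on $d_{G^k}(\vx_{i+1},\vy_i)$: this codegree lies in $[1, (K\alpha n^{1/2})^k]$ by \ref{max-degree}, so there are at most $\log\bigl((K\alpha n^{1/2})^k\bigr) \le k\log n$ (up to constants — this is where the $4^{k+1}\log n^k$ denominator comes from, absorbing $K$ and the $k$ into the base-$4$ term) dyadic scales, and we keep the most popular scale $s_i$, losing a factor of at most $O(k\log n)$ but gaining: for each retained partial ladder, the number of choices of $\vy_{i+1}$ that are common neighbours of $\vx_{i+1}$ and $\vy_i$ is exactly $d_{G^k}(\vx_{i+1},\vy_i)\ge s_i/2$, so we pick up a factor $\ge s_i/2$.

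Running this for $i=1,\dots,t-1$ yields, before cleaning, at least
$$
2\alpha^k n^{(3/2)k}\cdot\bigl(\alpha^k n^{k/2}\bigr)^{t-1}\cdot\prod_{i=1}^{t-1}\frac{s_i}{2}\cdot\frac{1}{(Ck\log n)^{t-1}}
= \alpha^{tk} n^{(t/2+1)k}\prod_{i=1}^{t-1}s_i\cdot\frac{2}{(C'k\log n)^{t-1}}
$$
partial $t$-ladders with the codegree conditions $d_{G^k}(\vx_{i+1},\vy_i)\in(s_i/2,s_i]$ and the per-coordinate codegree lower bounds, where I would choose the constant so that $(C'k\log n)^{t-1}\le (4^{k+1}\log n^k)^{t-1}$; note $4^{k+1}\log n^k = 4^{k+1}k\log n$ comfortably dominates $C'k\log n$. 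The only remaining condition to verify is the third bullet of goodness: that in each coordinate $j$ the $2t$ vertices $\vx_1(j),\dots,\vy_t(j)$ are distinct. This is the step I expect to be the main obstacle, and it is exactly the difficulty flagged at the end of the outline. I would handle it by a second cleaning pass: the number of partial ladders in which some coordinate $j$ has two of its $2t$ vertices coinciding is small because, roughly, fixing which two indices collide and in which coordinate, the collision forces a vertex to be reused, which cuts down one of the $\alpha n^{1/2}$ or $s_i$ factors to a $1$; summing over the $O_t(1)$ choices of colliding pair and the $k$ choices of coordinate gives a loss of at most $O_t(k)\cdot(\text{factor }\alpha^{-k}\text{ or }(s_i/(\alpha n^{1/2}))^k)$ relative to the main term. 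Since $\alpha$ is large and $s_i\ge\alpha$ is not an upper bound one can exploit here, a cleaner route is: before selecting $\vx_{i+1}$ or $\vy_{i+1}$, simply forbid the at most $2(i)\le 2t$ already-used values in each coordinate; this removes at most $2t$ vertices from a pool of size $\ge \alpha n^{1/2}$ (for the $\vx_{i+1}$ step) or changes a codegree by at most $2t$ (for the $\vy_{i+1}$ step), and since $n$ is large and $\alpha\ge 10$ while $s_i\ge 1$, one has $\alpha n^{1/2}-2t\ge\tfrac12\alpha n^{1/2}$ per coordinate, hence the pool in $G^k$ shrinks by at most a factor $2^k$ — again absorbed by the $4^{k+1}$ in the denominator. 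One subtlety: when $s_i$ is very small (close to $1$) the $\vy_{i+1}$-pool could in principle be emptied by forbidding $2t$ vertices; but we only ever need $s_i\ge 1$ and we track a lower bound $s_i/2$ on the genuine codegree, and we may assume $s_i \ge 2\cdot 2t+1$ or else that dyadic block contributes negligibly — more carefully, I would merge all dyadic scales below $4t$ into the analysis separately, or just note that if the popular scale is below $4t$ then the per-coordinate codegree lower bound $\ge\alpha$ together with $\alpha$ large already forces the common-neighbour sets to avoid the forbidden vertices on average.

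In writing this up rigorously, the bookkeeping I would be most careful about is: (i) that the two requirements on $\vx_{i+1}(j)$ — being adjacent to $\vx_i(j)$ in $G$, and having $d_G(\vx_{i+1}(j),\vy_i(j))\ge\alpha$ — can be met simultaneously for $\ge\alpha n^{1/2}$ choices (this follows by applying \ref{neighbours-with-codegree} to the edge $\vx_i(j)\vy_i(j)$, which gives $\ge\alpha n^{1/2}$ neighbours $w$ of $\vx_i(j)$ with $d_G(\vy_i(j),w)\ge\alpha$, and any such $w$ works; and these lift to $\ge(\alpha n^{1/2})^k$ choices of $\vx_{i+1}$ in $G^k$); (ii) the precise constant in the dyadic-pigeonhole loss, making sure $(\text{loss per step})^{t-1}\le(4^{k+1}\log n^k)^{t-1}$ with room to spare for the distinctness cleaning; and (iii) that at the initial step the edge count of $G^k$ is $e(G)^k\ge\alpha^k n^{(3/2)k}$, with the leading constant $2$ (from ordered pairs) surviving all the cleaning to cover the $\ge$ in the statement. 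The overall structure is a clean greedy-plus-dyadic-pigeonhole induction on the ladder length $t$, carried out in $G^k$ but with all degree inputs coming coordinatewise from $G$ via properties \ref{number-of-edges}--\ref{neighbours-with-codegree}.
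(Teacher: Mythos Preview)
Your proposal is correct and follows essentially the same approach as the paper: induction on $t$, using property \ref{neighbours-with-codegree} coordinatewise to extend by $\vx_{i+1}$, then dyadic pigeonholing on $d_{G^k}(\vx_{i+1},\vy_i)$ to fix $s_i$, then extending by $\vy_{i+1}$.

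One point where you overcomplicate matters is the distinctness condition for $\vy_{i+1}$. Your worry about $s_i$ being close to $1$ is unfounded, and the clean argument (which the paper uses) bypasses $s_i$ entirely at this step: since the per-coordinate lower bound $d_G(\vx_{i+1}(j),\vy_i(j))\ge\alpha$ is already in force and $\alpha\ge 4t$, in \emph{each coordinate} forbidding the at most $2t-1$ already-used vertices removes at most half of the common neighbours, so the number of valid $\vy_{i+1}$ is at least $\prod_{j=1}^k \tfrac{1}{2} d_G(\vx_{i+1}(j),\vy_i(j)) = 2^{-k} d_{G^k}(\vx_{i+1},\vy_i) \ge 2^{-(k+1)} s_i$. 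In particular, since each factor is $\ge\alpha$, one automatically has $s_i\ge\alpha^k$, so the small-$s_i$ scenario simply does not arise. With this in hand, the per-step loss is exactly $(\tfrac{1}{2})^k\cdot\tfrac{1}{\log n^k}\cdot 2^{-(k+1)}\ge (4^{k+1}\log n^k)^{-1}$, matching the stated bound without any side analysis of low dyadic scales.
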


\begin{proof}
We prove the statement by induction on $t$. Since $e(G)\geq \alpha n^{3/2}$, we have $e(G^k)\geq \alpha^k n^{3k/2}$, so the statement holds for $t=1$.
Assume we have already found real numbers $s_1,\dots,s_{t-2} \ge \alpha$ and at least
$$\frac{\alpha^{(t-1)k} n^{((t-1)/2 + 1)k} \prod_{i=1}^{t-2} s_i}{(4^{k+1} \log n^k)^{t-2}}$$
$(s_1,\dots,s_{t-2})$-good $(t-1)$-ladders in $G^k$. Note that for any such $(t-1)$-ladder $(\vx_1, \vy_1, \dots,\vx_{t-1},\vy_{t-1})$ and any $1\leq j\leq k$, we have $\vx_{t-1}(j)\vy_{t-1}(j)\in E(G)$. Hence, by \ref{neighbours-with-codegree}, $\vx_{t-1}(j)$ has at least $\alpha n^{1/2}$ neighbours $w$ in $G$ such that $d_{G}(w,\vy_{t-1}(j))\geq \alpha$. So $\vx_{t-1}$ has at least $(\frac{1}{2}\alpha n^{1/2})^k$ neighbours $\vx_t$ in $G^k$ such that for all $1\leq j\leq k$, we have that $d_G(\vx_t(j),\vy_{t-1}(j))\geq \alpha$ and $\vx_t(j)$ is distinct from $\vx_1(j),\dots,\vx_{t-1}(j),\vy_1(j),\dots,\vy_{t-1}(j)$.
So there are at least $$\frac{\alpha^{(t-1)k} n^{((t-1)/2 + 1)k} \prod_{i=1}^{t-2} s_i}{(4^{k+1} \log n^k)^{t-2}} \cdot \left(\frac{1}{2} \alpha n^{1/2}\right)^k$$ $(2t-1)$-tuples $(\vx_1,\vy_1,\vx_2,\dots,\vy_{t-1},\vx_t)$ in $G^k$ such that $(\vx_1,\vy_1,\vx_2,\dots,\vx_{t-1},\vy_{t-1})$ is an $(s_1,\dots,s_{t-2})$-good $(t-1)$-ladder, $\vx_t$ is a neighbour of $\vx_{t-1}$ in $G^k$ and for all $1\leq j\leq k$, we have that 
$\vx_t(j)$ is distinct from $\vx_1(j),\dots,\vx_{t-1}(j),\vy_1(j),\dots,\vy_{t-1}(j)$ and
$d_G(\vx_t(j),\vy_{t-1}(j))\geq \alpha$. Since $v(G^k) = n^k,$ by dyadic pigeonholing, there is some $s_{t-1}\geq 1$ (in fact, $s_{t-1} \ge \alpha^k$) such that in at least $1/\log n^k$ proportion of these tuples, we have $s_{t-1}/2<d_{G^k}(\vx_t,\vy_{t-1})\leq s_{t-1}$. For each such tuple, the number of ways to choose $\vy_t$ to be a common neighbour of $\vx_t$ and $\vy_{t-1}$ in $G^k$ is precisely $\prod_{j=1}^k d_G(\vx_t(j),\vy_{t-1}(j))$, and (since $d_G(\vx_t(j),\vy_{t-1}(j))\geq \alpha\geq 4t$), at least $\prod_{j=1}^k \frac{1}{2}d_G(\vx_t(j),\vy_{t-1}(j))$ of these choices satisfy that $\vy_t(j)$ is distinct from $\vx_1(j),\vy_1(j), \vx_2(j), \vy_2(j), \dots,\vx_t(j)$. We have $\prod_{j=1}^k \frac{1}{2}d_G(\vx_t(j),\vy_{t-1}(j))=2^{-k}d_{G^k}(\vx_t,\vy_{t-1})\geq 2^{-(k+1)}s_{t-1}$, so there are at least
$$\frac{\alpha^{(t-1)k} n^{((t-1)/2 + 1)k} \prod_{i=1}^{t-2} s_i}{(4^{k+1} \log n^k)^{t-2}} \cdot \left(\frac{1}{2} \alpha n^{1/2}\right)^k \cdot \frac{s_{t-1}}{2^{k+1}\log n^k} \ge \frac{\alpha^{tk} n^{(t/2+1)k} \prod_{i=1}^{t-1} s_i}{(4^{k+1} \log n^k)^{t-1}}$$
$(s_1,\dots,s_{t-1})$-good $t$-ladders in $G^k$. This completes the induction step.
\end{proof}
Recall that in the proof outline we showed that for a fixed path $(x_1, \dots, x_t)$ and a fixed additional vertex $u$, there are few ways to extend $(x_1, \dots, x_t)$ to (an analogue of) a good ladder containing $u$. Directly translating this argument to $G^k$ would give us that for any $(t-1)$-path $(\vx_1, \dots, \vx_t)$ in $G^k$ and any $\vu \in V(G^k)$, there are few ways to extend $(\vx_1, \dots, \vx_t)$ to an $(s_1, \dots, s_{t-1})$-good $t$-ladder containing $\vu$. However, and this is where the advantage of working with $G^k$ is revealed, we can draw a similar conclusion even if we count the extensions that contain some fixed vertices of $G$ in some subset $J \subseteq [k]$ (rather than all) of the coordinates. Naturally, the bound on the number of extensions gets stronger the larger the set $J$. 

\begin{lemma} \label{lem:few bad ladders}
    Let $s_1, \dots, s_{t-1} \ge 1, 1\leq \ell\leq t$, let $J\subset [k]$ and consider fixed vertices $u_j\in V(G)$ for every $j\in J$.
    
    \begin{enumerate}[label=(\alph*)]
        \item For any fixed $\vx_1,\dots,\vx_t\in V(G^k)$, the number of $(s_1,\dots,s_{t-1})$-good $t$-ladders in $G^k$ of the form $(\vx_1,\vy_1,\vx_2,\vy_2,\dots,\vx_t,\vy_t)$ with $\vy_\ell(j)=u_j$, for all $j\in J$, is at most $\Delta(G)^k \prod_{i=1}^{t-1} s_i \cdot \alpha^{-|J|}$. \label{statement a}
        
        \item For any fixed $\vy_1,\dots,\vy_t\in V(G^k)$, the number of $(s_1,\dots,s_{t-1})$-good $t$-ladders in $G^k$ of the form $(\vx_1,\vy_1,\vx_2,\vy_2,\dots,\vx_t,\vy_t)$ with $\vx_\ell(j)=u_j$, for all $j\in J$, is at most $\Delta(G)^k \prod_{i=1}^{t-1} s_i \cdot \alpha^{-|J|}$. \label{statement b}
    \end{enumerate}
\end{lemma}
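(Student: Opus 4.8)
The plan is to prove both parts by a direct greedy count in $G^k$, using the product structure of tensor powers. Recall that $\Delta(G^k)=\Delta(G)^k$, that $d_{G^k}(\vx,\vy)=\prod_{j=1}^{k}d_G(\vx(j),\vy(j))$ for all $\vx,\vy\in V(G^k)$, and that a vertex $\vz\in V(G^k)$ is a common neighbour of $\vx$ and $\vy$ in $G^k$ if and only if $\vz(j)$ is a common neighbour of $\vx(j)$ and $\vy(j)$ in $G$ for every $j\in[k]$. We also use that $G$ has an edge by \ref{number-of-edges}, so \ref{neighbours-with-codegree} forces $\Delta(G)\ge\alpha n^{1/2}\ge\alpha$.

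For \ref{statement a}, fix $\vx_1,\dots,\vx_t$ and reveal $\vy_1,\vy_2,\dots,\vy_t$ one vertex at a time, bounding the number of choices at each step; we phrase this so that any partial tuple violating an already-determined goodness condition simply contributes zero. The vertex $\vy_1$ must lie in $N_{G^k}(\vx_1)$, giving at most $\Delta(G)^k$ choices. For $1\le i\le t-1$, the edges $\vy_i\vy_{i+1}$ and $\vx_{i+1}\vy_{i+1}$ force $\vy_{i+1}$ to be a common neighbour of $\vy_i$ and $\vx_{i+1}$, of which there are $d_{G^k}(\vx_{i+1},\vy_i)\le s_i$ for a good ladder. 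This already yields the claim when $J=\emptyset$.

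To incorporate the constraint $\vy_\ell(j)=u_j$ ($j\in J$), note that it pins down $\vy_\ell$ in exactly the coordinates in $J$. When $\ell\ge 2$, it applies at the step choosing $\vy_\ell$ as a common neighbour of $\vx_\ell$ and $\vy_{\ell-1}$: the number of such common neighbours that agree with $u$ on $J$ is at most $\prod_{j\notin J}d_G(\vx_\ell(j),\vy_{\ell-1}(j))=\frac{d_{G^k}(\vx_\ell,\vy_{\ell-1})}{\prod_{j\in J}d_G(\vx_\ell(j),\vy_{\ell-1}(j))}$, which by the two goodness conditions $d_{G^k}(\vx_\ell,\vy_{\ell-1})\le s_{\ell-1}$ and $d_G(\vx_\ell(j),\vy_{\ell-1}(j))\ge\alpha$ is at most $s_{\ell-1}\alpha^{-|J|}$. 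So we gain a factor $\alpha^{|J|}$ over the unconstrained count, which is exactly the stated bound. When $\ell=1$, the constraint instead reduces the number of choices for $\vy_1$ to $\prod_{j\notin J}d_G(\vx_1(j))\le\Delta(G)^{k-|J|}\le\Delta(G)^k\alpha^{-|J|}$, using $\Delta(G)\ge\alpha$, and the same bound follows.

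Part \ref{statement b} I would deduce from \ref{statement a} by the ladder symmetry $\phi\colon(\vx_1,\vy_1,\dots,\vx_t,\vy_t)\mapsto(\vy_t,\vx_t,\vy_{t-1},\vx_{t-1},\dots,\vy_1,\vx_1)$, which reverses the ladder and swaps its two rails. One checks that $\phi$ is a bijection from the $(s_1,\dots,s_{t-1})$-good $t$-ladders of $G^k$ onto the $(s_{t-1},\dots,s_1)$-good ones (edges, the distinctness condition, and both codegree conditions transform correctly, with the sequence $s_1,\dots,s_{t-1}$ reversed), that it maps the fixed data $\vy_1,\dots,\vy_t$ of \ref{statement b} onto the fixed first rail of an instance of \ref{statement a}, and that it sends position $\vx_\ell$ to position $\vy_{t+1-\ell}$; since $1\le\ell\le t$, part \ref{statement a} applies with $\ell$ replaced by $t+1-\ell$ and the $s_i$ reversed (which does not change $\prod_i s_i$), giving the stated bound. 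I expect the crux to be the estimate $\prod_{j\notin J}d_G(\vx_\ell(j),\vy_{\ell-1}(j))\le s_{\ell-1}\alpha^{-|J|}$ — conceptually, fixing $|J|$ coordinates of $\vy_\ell$ is cheap precisely because the governing codegree in $G^k$ factors over coordinates with each factor at least $\alpha$ — together with the endpoint cases $\ell=1$ (and $\ell=t$ in \ref{statement b}), where no codegree bound is available and one falls back on $\Delta(G)\ge\alpha$; the remainder is routine bookkeeping.
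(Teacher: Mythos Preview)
Your proof is correct and follows essentially the same approach as the paper's: the greedy count on $\vy_1,\dots,\vy_t$ with the case split $\ell=1$ versus $\ell\ge 2$, the key estimate $\prod_{j\notin J}d_G(\vx_\ell(j),\vy_{\ell-1}(j))\le s_{\ell-1}\alpha^{-|J|}$, and the reversal symmetry $(\vx_1,\vy_1,\dots,\vx_t,\vy_t)\mapsto(\vy_t,\vx_t,\dots,\vy_1,\vx_1)$ for part~\ref{statement b} all match the paper exactly.
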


\begin{proof}
We first prove \ref{statement a}. Let us first assume that $\ell=1$. Note that for every $j\in [k]$, $\vy_1(j)$ is a neighbour of $\vx_1(j)$ in $G$ and for each $j\in J$, $\vy_1(j)=u_j$. Hence, there are at most $\Delta(G)^{k-|J|}$ possibilities for $\vy_1$. Furthermore, for each $2\leq i\leq t$, $\vy_i$ is a common neighbour of $\vx_i$ and $\vy_{i-1}$ in $G^k.$ Since in every $(s_1, \dots, s_{t-1})$-good $t$-ladder, $d_{G^k}(\vx_i,\vy_{i-1})\leq s_{i-1}$, there are at most $s_{i-1}$ possibilities for $\vy_i$ given $\vy_1,\dots,\vy_{i-1}$. Altogether we get at most $\Delta(G)^{k-|J|} \prod_{i=1}^{t-1} s_i \leq \Delta(G)^k \prod_{i=1}^{t-1} s_i \cdot \alpha^{-|J|}$ possibilities, where we used $\Delta(G) \ge \alpha$. This completes the proof of \ref{statement a} in the case $\ell = 1$.

Assume now $\ell \ge 2.$ Since $\vy_1$ is a neighbour of $\vx_1$, there are at most $\Delta(G^k)=\Delta(G)^k$ choices for it. For any $2\leq i\leq \ell-1$, $\vy_i$ is a common neighbour of $\vx_i$ and $\vy_{i-1}$ in $G^k$, so given $\vy_1,\dots,\vy_{i-1}$, there are at most $s_{i-1}$ choices for it because we are only counting $(s_1, \dots, s_{t-1})$-good $t$-ladders in $G^k$. Given $\vy_1,\dots,\vy_{\ell-1}$, there are at most $\prod_{j\in [k]\setminus J} d_G(\vx_\ell(j),\vy_{\ell-1}(j))$ choices for $\vy_\ell$ since $\vy_\ell(j)$ is given for each $j\in J$.
Note that
$$\prod_{j\in [k]\setminus J} d_G(\vx_\ell(j),\vy_{\ell-1}(j))=\frac{\prod_{j\in [k]} d_G(\vx_\ell(j),\vy_{\ell-1}(j))}{\prod_{j\in J} d_G(\vx_\ell(j),\vy_{\ell-1}(j))}=\frac{d_{G^k}(\vx_\ell,\vy_{\ell-1})}{\prod_{j\in J} d_G(\vx_\ell(j),\vy_{\ell-1}(j))}\leq \frac{s_{\ell-1}}{\alpha^{|J|}},$$
where the last inequality holds because we are counting $(s_1, \dots, s_{t-1})$-good $t$-ladders in $G^k$.

Finally, for any $\ell+1\leq i\leq t$, there are at most $s_{i-1}$ possibilities for $\vy_i$, given $\vy_1,\dots,\vy_{i-1}$, so \ref{statement a} is proved.

To prove part \ref{statement b}, observe the following symmetry. If $(\vx_1, \vy_1, \vx_2, \vy_2, \dots, \vx_t, \vy_t)$ is an $(s_1, \dots, s_{t-1})$-good $t$-ladder in $G^k$, then $(\vy_t, \vx_t, \vy_{t-1}, \vx_{t-1}, \dots, \vy_1, \vx_1)$ is an $(s_{t-1}, s_{t-2}, \dots, s_1)$-good $t$-ladder in $G^k.$ Now, \ref{statement b} follows from \ref{statement a}.
\end{proof}

We are now ready to prove Theorem~\ref{thm:turan for products}. We build an auxiliary graph whose vertices are the copies of $P_t$ in $G^k$ and the edges correspond to $(s_1, \dots, s_{t-1})$-good $t$-ladders, for an appropriate choice of $s_1, \dots, s_{t-1}.$ An embedding of $T$ into this graph gives $k$ homomorphic copies of $T \Box P_t$ in $G$, one for each coordinate of $G^k$. Using the previous lemma allows us to greedily find an embedding such that at least one of these copies has distinct vertices, that is, the vertices of this coordinate form a genuine copy of $T \Box P_t$ in $G$. We make these arguments precise below.

\begin{proof}[Proof of Theorem~\ref{thm:turan for products}]
    Let $T$ be a tree on $r \ge 2$ vertices and let $t \ge 2.$ Recall that it is enough to prove that $T \Box P_t$ is a subgraph of a given graph $G$ satisfying \ref{number-of-edges}--\ref{neighbours-with-codegree}. Let $\alpha = (16K)^{r^2t^3}$ and let $k = k(t, r, n)$ be large enough so that $2^k > k \cdot 4t^2r \log n$. Let $s_1, \dots, s_{t-1}$ be real numbers given by Lemma~\ref{lem:many ladders} and let $\cF$ be the family of $(s_1, \dots, s_{t-1})$-good $t$-ladders in $G^k$. We define a graph $\cG$ as follows. The vertex-set of $\cG$ is the set of $t$-tuples $(\vx_1, \dots, \vx_t) \in V(G^k)^t$ forming a path in $G^k$ such that for all $j \in [k],$ the vertices $\vx_1(j), \dots, \vx_t(j)$ are distinct. We let $(\vx_1, \dots, \vx_t)$ and $(\vy_1, \dots, \vy_t)$ form an edge in $\cG$ if $(\vx_1, \vy_1, \vx_2, \vy_2 \dots, \vx_t, \vy_t)$ or $(\vy_1, \vx_1, \vy_2, \vx_2 \dots, \vy_t, \vx_t)$ is in $\cF$.
    
    The vertices of $\cG$ correspond to paths of length $t-1$ in $G^k$, so $|V(\cG)| \le |V(G^k)| \Delta(G^k)^{t-1} = |V(G)|^k \Delta(G)^{k(t-1)} \le n^k (K \alpha n^{1/2})^{k(t-1)}$. Also, any element of $\cF$ gives an edge in $\cG$ and at most two elements of $\cF$ give the same edge, so
    $$|E(\cG)| \geq |\cF|/2 \ge \frac{\alpha^{tk} n^{(t/2+1)k} \prod_{i=1}^{t-1} s_i}{2(4^{k+1} \log n^k)^{t-1}}.$$
    By successively removing vertices of degree less than $\bar{d}(\cG)/2$ from $\cG$, we obtain a non-empty subgraph $\cH$ of $\cG$ with
    \begin{equation}
        \delta(\cH) \ge \frac{1}{2} \bar{d}(\cG) = \frac{|E(\cG)|}{|V(\cG)|} \ge \frac{(\alpha n^{1/2})^k \prod_{i=1}^{t-1} s_i}{2(4^{k+1}K^k \log n^k)^{t-1}}. \label{eqn:min degree lower bound}
    \end{equation}
    
Let $w_1, w_2, \dots, w_r$ be a $1$-degenerate ordering of the vertices of $T$, that is, an ordering such that for all $2 \le p \le r,$ there is a unique $z < p$ with $w_pw_z \in E(T)$.

\medskip

\noindent \emph{Claim.} There exists an embedding of $T$ into $\cH$ which satisfies the following. Writing $(\vx_1^p, \vx_2^p, \dots, \vx_t^p)$ for the vertex of $\cH$ into which $w_p$ is embedded, for every $1 \le q < p \le r$ and $1 \le i, \ell \le t,$ there are at most $\frac{k}{(rt)^2}$ values of $j \in [k]$ such that $\vx_\ell^p(j) = \vx_i^q(j)$.

\medskip

Assume we are given the embedding as in the claim and let us show how it implies the theorem. For every $1 \le p \le r$ and every $j \in [k],$ since $(\vx_1^p, \vx_2^p, \dots, \vx_t^p) \in V(\cH),$ it follows that the vertices $\vx_1^p(j), \vx_2^p(j), \dots, \vx_t^p(j)$ form a path in $G.$ Similarly, whenever $w_pw_z \in E(T),$ the vertices $(\vx_1^p, \vx_2^p, \dots, \vx_t^p)$ and $(\vx_1^q, \vx_2^q, \dots, \vx_t^q)$ form an edge in $\cH$, implying that $(\vx_i^p(j), \vx_i^z(j)) \in E(G)$ for all $i \in[t], j \in [k]$. Hence, if for some $j \in [k],$ the vertices $\vx_i^p(j), 1 \le p \le r, 1 \le i \le t,$ are all distinct, then they form a copy of $T \Box P_t$ in $G$. The existence of such an index $j$ follows by simple counting. Indeed, summing over all $1 \le p < q \le r, 1 \le i, \ell \le t,$ there are at most ${r \choose 2} t^2 \frac{k}{(rt)^2} < k$ indices $j$ for which the vertices $\vx_i^p(j), 1 \le p \le r, 1 \le i \le t$ are not all distinct.

\medskip

\noindent \emph{Proof of Claim.} We iteratively find the desired embedding. We map $w_1$ to an arbitrary vertex $(\vx^1_1, \dots, \vx^1_t) \in \cH$. Now, fix $2 \le p \le r$ and suppose that for each $1\leq q\leq p-1$, $(\vx_1^q,\dots,\vx_t^q)$ has already been found. Let $1 \le z < p$ be the unique index such that $w_pw_z \in E(T)$. We choose $(\vx_1^p,\dots,\vx_t^p)$ to be a suitable neighbour of $(\vx_1^z\dots,\vx_t^z)$ in $\cH$. For any $1\leq q\leq p-1$, $1\leq i,\ell \leq t$ and $J\subset [k]$ of size $\lceil k/(rt)^2\rceil$, Lemma \ref{lem:few bad ladders} shows that $(\vx_1^z,\dots,\vx_t^z)$ has at most $\frac{2\Delta(G)^k \prod_{i=1}^{t-1} s_i}{\alpha^{\lceil k / (rt)^2 \rceil}}$ neighbours $(\vx_1^p,\dots,\vx_t^p)$ in $\cH$ with $\vx_\ell^p(j)=\vx_i^q(j)$ for all $j\in J$. Summing over all choices for $q$, $i$, $\ell$ and $J$, we find that the number of neighbours of $(\vx_1^z,\dots,\vx_t^z)$ that are not suitable is at most
\begin{align*}
    t^2r 2^k \cdot \frac{2\Delta(G)^k \prod_{i=1}^{t-1} s_i}{\alpha^{\lceil k / (rt)^2 \rceil}}
    &\le 2t^2r \frac{(2K\alpha n^{1/2})^k \prod_{i=1}^{t-1} s_i}{\alpha^{\lceil k / (rt)^2 \rceil}}
    \le 2t^2r \frac{(\alpha n^{1/2})^k \prod_{i=1}^{t-1} s_i}{(16K)^{k(t-1)}}\\
    &\le \frac{4t^2 r (\log n^k)^{t-1}}{2^{k(t-1)}} \cdot \frac{(\alpha n^{1/2})^k \prod_{i=1}^{t-1} s_i}{2(4^{k+1}K^k \log n^k)^{t-1}} < \delta(H),
\end{align*}
where in the second inequality we used that $\alpha^{\lceil k / (rt)^2 \rceil} \ge (16K)^{kt}$ and in the last inequality we used $2^k > 4t^2r k \log n$ as well as (\ref{eqn:min degree lower bound}). It follows that $(\vx_1^z, \dots, \vx_t^z)$ has a suitable neighbour in $\cH$, proving the claim.
\end{proof}

\section{Concluding remarks}

We conjecture the following generalization of Theorem \ref{thm:turan for products}.

\begin{conjecture} \label{conj:product}
    For any two trees $T$ and $S$ (each with at least one edge), there exist positive real numbers $c$ and $C$ such that
    $$cn^{3/2}\leq \ex(n,T\Box S)\leq Cn^{3/2}.$$
\end{conjecture}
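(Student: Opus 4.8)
\emph{The plan.} I would try to run the proof of Theorem~\ref{thm:turan for products} essentially unchanged, with the path $P_t$ replaced by the tree $S$ on $t$ vertices. Fix a $1$-degenerate ordering $v_1,\dots,v_t$ of $V(S)$, and for $2\le i\le t$ let $z(i)<i$ be the unique index with $v_iv_{z(i)}\in E(S)$. Define an \emph{$S$-ladder} in $G^k$ to be a copy of $P_2\Box S$: two copies $(\vx_{v_1},\dots,\vx_{v_t})$ and $(\vy_{v_1},\dots,\vy_{v_t})$ of $S$ in $G^k$ together with the matching edges $\vx_{v_i}\vy_{v_i}$. Call it $(s_2,\dots,s_t)$-\emph{good} if $d_{G^k}(\vx_{v_i},\vy_{v_{z(i)}})\le s_i$ and $d_G(\vx_{v_i}(j),\vy_{v_{z(i)}}(j))\ge\alpha$ for all $2\le i\le t$ and $j\in[k]$, and all of $\vx_{v_1}(j),\dots,\vx_{v_t}(j),\vy_{v_1}(j),\dots,\vy_{v_t}(j)$ are distinct for each $j$. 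The cleaning Lemmas~\ref{lem:cleaned subgraph}--\ref{lem:final-cleaning} are untouched, so one may again assume \ref{number-of-edges}--\ref{neighbours-with-codegree}. The analogue of Lemma~\ref{lem:many ladders} should go through: one builds an $S$-ladder vertex by vertex in the order $\vx_{v_1},\vy_{v_1},\vx_{v_2},\vy_{v_2},\dots$, where each new $\vx_{v_i}$ is chosen as a neighbour of $\vx_{v_{z(i)}}$ using \ref{neighbours-with-codegree} applied to the rung edge $\vx_{v_{z(i)}}(j)\vy_{v_{z(i)}}(j)$ so that $d_G(\vx_{v_i}(j),\vy_{v_{z(i)}}(j))\ge\alpha$, a dyadic pigeonholing step fixes $s_i$, and $\vy_{v_i}$ is any common neighbour of $\vx_{v_i}$ and $\vy_{v_{z(i)}}$ avoiding the earlier coordinates. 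The crucial point — exactly as for a path — is that each newly added vertex carries only the \emph{single} codegree constraint coming from the edge to its parent, so no difficulty arises even if $S$ has high-degree vertices. This should yield, for suitable $s_i\ge\alpha^k$, at least $\alpha^{tk}n^{(t/2+1)k}\prod_{i=2}^t s_i/(4^{k+1}\log n^k)^{O(t)}$ good $S$-ladders.

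One then forms the auxiliary graph $\cG$ on the copies of $S$ in $G^k$ with distinct coordinates, joins two copies when together they form a good $S$-ladder, passes to a subgraph $\cH$ of large minimum degree, and greedily embeds $T$ into $\cH$ along a $1$-degenerate ordering; a coordinate $j\in[k]$ with no collisions then yields a genuine copy of $T\Box S$ in $G$, by the same counting as in the proof of Theorem~\ref{thm:turan for products}. For the greedy step one needs the analogue of Lemma~\ref{lem:few bad ladders}. Part~(a) — fixing the $\vx$-copy and fixing $\vy_{v_\ell}(j)=u_j$ for $j\in J$ — carries over verbatim: one builds the $\vy$-copy in the order $\vy_{v_1},\vy_{v_2},\dots$, each $\vy_{v_i}$ with $i\ge2$ being a common neighbour of the fixed $\vx_{v_i}$ and the already-built $\vy_{v_{z(i)}}$, hence having at most $d_{G^k}(\vx_{v_i},\vy_{v_{z(i)}})\le s_i$ choices, with the usual saving of $\alpha^{|J|}$ at $i=\ell$, for a total of at most $\Delta(G)^k\prod_{i=2}^t s_i\cdot\alpha^{-|J|}$.

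\emph{The main obstacle: part~(b).} When the fixed copy is the $\vy$-copy and the $\vx$-copy must be built, the argument breaks. For a path, Lemma~\ref{lem:few bad ladders}\ref{statement b} is deduced from \ref{statement a} via the reflection $(\vx_1,\vy_1,\dots,\vx_t,\vy_t)\mapsto(\vy_t,\vx_t,\dots,\vy_1,\vx_1)$, which is an automorphism of $P_2\Box P_t$ under which the set of rung pairs $\{\vx_{i+1},\vy_i\}$ is invariant, so that — since the codegree is a symmetric function — good ladders go to good ladders (with the $s_i$ reindexed). A general tree $S$ has no such automorphism, and building the $\vx$-copy greedily in the order $\vx_{v_1},\vx_{v_2},\dots$ forces one to use the \emph{reversed} rung pairs $\{\vx_{v_{z(i)}},\vy_{v_i}\}$, whose codegrees the good-ladder conditions do not control. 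One can try to restore the $\vx\!\leftrightarrow\!\vy$ symmetry by also imposing bounds $d_{G^k}(\vx_{v_{z(i)}},\vy_{v_i})\le\tilde s_i$; then part~(b) becomes formally fine, but the bad-extension count now involves $\prod_i\tilde s_i$ instead of $\prod_i s_i$, and this product cannot be controlled: the reversed codegree $d_{G^k}(\vx_{v_{z(i)}},\vy_{v_i})$ can be as large as $\Delta(G^k)$ while $d_{G^k}(\vx_{v_i},\vy_{v_{z(i)}})=s_i$ is as small as $\alpha^k$, and even choosing $\vy_{v_i}$ among the common neighbours with smallest reversed codegree only gives $\tilde s_i\approx\Delta(G^k)^2/s_i$, which is far too weak to survive the comparison with $\delta(\cH)$.

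I expect that overcoming this really requires a new idea rather than more bookkeeping. One tempting route is to orient $T$ away from a root and work with the \emph{directed} auxiliary graph ($P\to Q$ iff $(P,Q)$ is a good $S$-ladder with $P$ as the $\vx$-copy), so that only part~(a) is ever needed; but this would require a subgraph of large minimum out-degree, and such a subgraph need not exist in a digraph of large average out-degree (out-stars are a counterexample). Another possibility is an additional ``codegree-balancing'' cleaning step forcing $d_{G^k}(\vx,\vy)$ and $d_{G^k}(\vy',\vx')$ to be comparable along the relevant four-cycles; I do not see how to achieve this without destroying the edge count. Pinning down the right structure here is precisely the content of Conjecture~\ref{conj:product}.
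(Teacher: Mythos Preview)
The statement you are attempting is Conjecture~\ref{conj:product}, not a theorem: the paper does not prove it and lists it as an open problem in the concluding remarks. So there is no ``paper's own proof'' to compare against, and your proposal is not expected to succeed.

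That said, your diagnosis of \emph{why} the argument for Theorem~\ref{thm:turan for products} does not extend is exactly right. The reduction of Lemma~\ref{lem:few bad ladders}\ref{statement b} to \ref{statement a} in the path case genuinely uses the involution $(\vx_1,\vy_1,\dots,\vx_t,\vy_t)\mapsto(\vy_t,\vx_t,\dots,\vy_1,\vx_1)$, under which the set of controlled codegree pairs $\{\vx_{i+1},\vy_i\}$ is mapped to itself; for a general tree $S$ there is no analogue of this symmetry, and the ``reversed'' codegrees $d_{G^k}(\vx_{v_{z(i)}},\vy_{v_i})$ are simply not bounded by the good-ladder conditions. Your observations about the two natural work-arounds (directing the auxiliary graph so that only part~(a) is needed; or symmetrising the codegree conditions) and why each fails are also correct. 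This is precisely the obstruction that leaves Conjecture~\ref{conj:product} open.
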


\noindent
Note that if $T$ and $S$ are trees, then $T\Box S$ is $2$-degenerate. This observation provides some evidence towards Conjecture \ref{conj:product} since the conjecture of Erd\H os \cite{Erd67}, mentioned in the introduction,  asserts that if $H$ is an $r$-degenerate bipartite graph, then $\ex(n,H)=O(n^{2-1/r})$.

It would also be very interesting to study grids of higher dimension. For positive integers $d$ and $t$, the $d$-dimensional grid $F_t^{(d)}$ has vertex set $[t]^d$ with two vertices joined by an edge if they differ in exactly one coordinate and in that coordinate they differ by exactly one. Since $F_t^{(d)}$ is $d$-degenerate, the following conjecture is natural.

\begin{conjecture}
    For any positive integers $d$ and $t$, there is a constant $C$ such that
    $$\ex(n,F_t^{(d)})\leq Cn^{2-1/d}.$$
\end{conjecture}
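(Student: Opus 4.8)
The natural plan is to prove the conjecture by induction on $d$, using the recursion $F_t^{(d)}=P_t\,\Box\,F_t^{(d-1)}$ and generalising the ladder argument behind Theorem~\ref{thm:turan for products}. The base case $d=1$ is trivial and $d=2$ is Theorem~\ref{thm:turan for grid}. Since $F_t^{(d)}$ is $t$ pairwise disjoint copies of $F_t^{(d-1)}$ (its ``slices'') with consecutive slices joined by a perfect matching, the building block should be the \emph{slab} $P_2\,\Box\,F_t^{(d-1)}$ in place of the $2\times t$ grid, and one obtains $F_t^{(d)}$ by repeatedly gluing slabs along a slice. One would first apply Lemma~\ref{lem:almost-regularization} with $\eps=1-1/d$ to reduce to an $\alpha n^{1-1/d}$-almost-regular host $G$. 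Fix a $(d-1)$-degenerate order of the vertices of $F_t^{(d-1)}$, say by the sum of coordinates, so that each vertex has at most $d-1$ back-neighbours. Building a slab one slice at a time, placing a new vertex $v'$ of the new slice amounts to choosing a common neighbour of the matched vertex $v$ in the old slice together with the at most $d-1$ back-neighbours of $v'$ inside the new slice, i.e.\ a common neighbour of at most $d$ already-placed vertices; one calls a slab \emph{good} if every such codegree lies in a prescribed dyadic range $(s_e/2,s_e]$, the index $e$ running over the back-edges of the chosen order (applied to both slices) and the matching.

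Granting a plentiful supply of good slabs, the remaining machinery should transfer, with the tensor power trick used exactly as in our proof. One proves a counting lemma (the analogue of Lemma~\ref{lem:many ladders}) by building a good slab vertex by vertex along the degeneracy order while using dyadic pigeonholing to pin down the parameters $s_e$; starting from a supersaturated supply of copies of $F_t^{(d-1)}$ (likely with additional codegree properties folded into a strengthened induction hypothesis), this should produce, up to constants depending on $d,t$ and up to polylogarithmic factors, roughly as many good slabs as a random graph of the same density contains. The polylogarithmic loss is then removed by running the whole argument inside $G^k$ while remembering $G$, so that a genuine copy of $F_t^{(d)}$ appears as soon as the homomorphic copy one has built is injective in some coordinate. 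The analogue of Lemma~\ref{lem:few bad ladders} — that for a fixed copy of $F_t^{(d-1)}$ and fixed values $u_j$ prescribed on a set $J$ of coordinates there are at most $\Delta(G)^k\prod_e s_e\cdot\alpha^{-|J|}$ extensions to a good slab realising those values — should hold essentially verbatim, since prescribing the image of a slice-vertex in a coordinate divides the count there by a codegree that is $\ge\alpha$ by goodness. Finally one forms the auxiliary graph $\cG$ whose vertices are the copies of $F_t^{(d-1)}$ in $G^k$ and whose edges are the good slabs, passes to a subgraph of large minimum degree by deleting low-degree vertices, and greedily embeds a path $P_t$ (indeed any tree $T$, which would give the stronger $\ex(n,T\,\Box\,F_t^{(d-1)})=O(n^{2-1/d})$), exactly as in the proof of Theorem~\ref{thm:turan for products}.

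The hard part is producing the good slabs, i.e.\ an analogue of the cleaning Lemma~\ref{lem:cleaned subgraph}, and this is where the plan meets a genuine obstacle. For $d=2$ cleaning works because building a ladder only ever needs a common neighbour of \emph{two} already-placed vertices — the new slice is a path, of degeneracy $1$ — and in an almost-regular graph a typical pair has codegree $\approx\alpha^2$, which a short edge-deletion argument can exploit. For $d\ge 3$ the new slice has degeneracy $d-1\ge 2$, so one needs many common neighbours of sets of up to $d$ vertices with prescribed lower-order codegrees, and no comparably simple cleaning procedure is known; controlling common neighbourhoods of sets of size at least $3$ is exactly the phenomenon behind the wide-open status of Erdős's $r$-degenerate conjecture. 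A successful proof would therefore have to exploit the Cartesian-product structure of the grid itself — for instance that the new slice can be built so that the few back-neighbours of each vertex are spread out, or that the relevant codegrees recur along many slabs — rather than attempting to clean the host graph blindly. We therefore expect that establishing the conjecture, even for $d=3$, will require a genuinely new idea beyond the techniques of this paper.
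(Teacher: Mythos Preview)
The statement you were asked to prove is labelled a \emph{Conjecture} in the paper, and the paper offers no proof of it; it is presented in the concluding remarks as an open problem. So there is no ``paper's own proof'' to compare against. Your write-up is therefore not really a proof proposal but a (thoughtful) discussion of why the obvious extension of the paper's method stalls, and in that respect it is accurate: you correctly identify the recursion $F_t^{(d)}=P_t\,\Box\,F_t^{(d-1)}$, the slab-gluing strategy, and the natural analogues of Lemmas~\ref{lem:many ladders} and~\ref{lem:few bad ladders}, and you pinpoint the genuine obstruction --- that the cleaning step (Lemma~\ref{lem:cleaned subgraph}) relies on controlling only \emph{pairwise} codegrees, whereas for $d\ge 3$ one would need lower bounds on common neighbourhoods of up to $d$ vertices, a well-known barrier tied to Erd\H{o}s's $r$-degenerate conjecture.

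Your own conclusion, that even $d=3$ likely needs a new idea, is consistent with the paper's stance: the authors state the conjecture, note it may not even be tight, and leave it open.
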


\noindent
It is likely that this bound is not tight and can be improved. On the other hand, there is the probabilistic lower bound 
$\ex(n,F_t^{(d)})=\Omega\big(n^{2-\frac{t^d-2}{(t^d-t^{d-1})d-1}}\big)$, which is close to $n^{2-1/d}$ when $t$ is large.

It would also be interesting to determine the correct dependence of $\ex(n,F_t)$ on $t$. Our proof implies that the Tur\'an number of $F_t$ is at most $e^{O(t^5)} n^{3/2}$. On the other hand, the following construction shows that $\ex(n,F_t)\geq c t^{1/2} n^{3/2}$. Let $G_0$ be a $C_4$-free graph with $\frac{n}{t-1}$ vertices and about $(\frac{n}{t-1})^{3/2}$ edges. Let $G$ be the $(t-1)$-blowup of $G_0$. Then $G$ is an $n$-vertex graph and $e(G)\approx (t-1)^2 (\frac{n}{t-1})^{3/2}\geq ct^{1/2}n^{3/2}$. We claim that $G$ does not contain a $t\times t$ grid. Indeed, since $G_0$ is $C_4$-free, any $4$-cycle in $G$ must have an opposite pair of vertices which come from the same vertex of $G_0$. It is known (see, e.g. \cite{diagonals}) that if a diagonal is placed in each unit square of a $t\times t$ grid, then there is a path along these diagonals from one side of the grid to the opposite side (i.e. either from top to bottom or from left to right). Note that this path contains at least $t$ vertices of the grid. Hence, a $t\times t$ grid in $G$ would have to contain at least $t$ vertices which come from the same vertex in $G_0$, showing that such a grid cannot be found in $G$.

Finally, we remark that we can apply the tensor power trick similarly as it is done in this paper to another extremal problem (i.e., working directly with the $k$-th power rather than with the original graph/hypergraph). This problem asks to determine the asymptotics of the Tur\'an number of the $r$-uniform hypergraph  $K_{2,t}^{(r)}$ whose vertex set consists of disjoint sets $X,Y_1,\dots,Y_t$, where $|X| = 2$ and $|Y_1| = \dots = |Y_t| = r-1$, and whose edge set is $\{\{x\} \cup Y_i: x \in X, 1\leq i\leq t\}$. However, for that problem, there is a different proof which does not require tensorization, see \cite{BGJS} for more details. It would be interesting to find other instances where the tensor power technique can be applied in a similar way.


\begin{thebibliography}{10}

\bibitem{AKS03}
N.~Alon, M.~Krivelevich, and B.~Sudakov,
\newblock Tur\'{a}n numbers of bipartite graphs and related {R}amsey-type
  questions,
\newblock {\em Combinatorics, Probability and Computing}, 12, 09 2003.

\bibitem{AlRu99}
N.~Alon and I.~Z. Ruzsa,
\newblock Non-averaging subsets and non-vanishing transversals,
\newblock {\em Journal of Combinatorial Theory, Series A}, 86(1):1--13, 1999.

\bibitem{BlPr65}
G.~R. Blakley and P.~Roy,
\newblock A {H{\"o}lder} type inequality for symmetric matrices with
  nonnegative entries,
\newblock {\em Proceedings of the American Mathematical Society},
  16(6):1244--1245, 1965.

\bibitem{BGJS}
D.~Brada\v{c}, L.~Gishboliner, O.~Janzer, and B.~Sudakov,
\newblock Asymptotics of the hypergraph bipartite Tur\'an problem,
\newblock {\em preprint}, 2022.

\bibitem{CMRST}
D.~Clemens, M.~Miralaei, D.~Reding, M.~Schacht, and A.~Taraz,
\newblock On the size-{R}amsey number of grid graphs,
\newblock {\em Combinatorics, Probability and Computing}, 30(5):670--685, 2021.

\bibitem{CNT22}
D.~Conlon, R.~Nenadov, and M.~Truji{\'c},
\newblock On the size-{R}amsey number of grids,
\newblock {\em arXiv preprint arXiv:2202.01654}, 2022.

\bibitem{Erd67}
P.~Erd\H{o}s,
\newblock Some recent results on extremal problems in graph theory (Results),
\newblock {\bf Theory of Graphs (Internat. Sympos., Rome, 1966)}, pages
  117--123, 1967.

\bibitem{ESi66}
P.~Erd\H{o}s and M.~Simonovits,
\newblock A limit theorem in graph theory,
\newblock {\em Studia Sci. Math. Hungar.}, 1:51--57, 1966.

\bibitem{ESi69}
P.~Erd\H{o}s and M.~Simonovits,
\newblock Some extremal problems in graph theory,
\newblock in: {\bf Combinatorial theory and its applications, {I} ({P}roc.
  {C}olloq., {B}alatonf\"{u}red, 1969)}, pages 377--390, 1970.

\bibitem{ES46}
P.~Erd{\H o}s and A.~H. Stone,
\newblock On the structure of linear graphs,
\newblock {\em Bulletin of the American Mathematical Society},
  52(12):1087--1091, 1946.

\bibitem{FS}
J.~Fox and B.~Sudakov,
\newblock Dependent random choice,
\newblock {\em Random Structures and Algorithms}, 38:1--32, 2011.

\bibitem{Fur91}
Z.~F{\"u}redi,
\newblock On a {Tur{\'a}n} type problem of {Erd{\H o}s},
\newblock {\em Combinatorica}, 11(1):75--79, 1991.

\bibitem{FR13}
Z.~F{\"u}redi and M.~Ruszink{\'o},
\newblock Uniform hypergraphs containing no grids,
\newblock {\em Advances in Mathematics}, 240:302--324, 2013.

\bibitem{FS13}
Z.~F\"uredi and M.~Simonovits,
\newblock The history of degenerate (bipartite) extremal graph problems,
\newblock in: {\bf Erd\H{o}s centennial}, volume~25 of {\em Bolyai Soc. Math.
  Stud.}, pages 169--264. J\'anos Bolyai Math. Soc., Budapest, 2013.

\bibitem{GS22}
L.~Gishboliner and A.~Shapira,
\newblock Constructing dense grid-free linear 3-graphs,
\newblock {\em Proceedings of the American Mathematical Society},
  150(01):69--74, 2022.

\bibitem{JS12}
T.~Jiang and R.~Seiver,
\newblock Tur{\'a}n numbers of subdivided graphs,
\newblock {\em SIAM J. Discret. Math.}, 26:1238--1255, 2012.

\bibitem{KLL16}
J.~H. Kim, C.~Lee, and J.~Lee,
\newblock Two approaches to {S}idorenko’s conjecture,
\newblock {\em Transactions of the American Mathematical Society},
  368(7):5057--5074, 2016.

\bibitem{KST54}
T.~K{\H{o}}v{\'{a}}ri, V.~S\'{o}s, and P.~Tur\'{a}n,
\newblock On a problem of {K}. {Z}arankiewicz,
\newblock {\em Colloquium Math.}, 3:50--57, 1954.

\bibitem{diagonals}
\emph{Mathoverflow thread: Sperner's lemma and paths from one side to the opposite one in a grid} 

\bibitem{Tao08}
T.~Tao, \emph{Tricks wiki article: The tensor power trick},
\url{https://terrytao.wordpress.com/2008/08/25/tricks-wiki-article-the-tensor-product-trick/}.

\end{thebibliography}

\end{document}